\theoremstyle{plain}
\newtheorem{thm}{Theorem}[section]
\newtheorem{prop}[thm]{Proposition}
\theoremstyle{definition}
\newtheorem{defn}[thm]{Definition}
\theoremstyle{remark}
\newtheorem{rmk}[thm]{Remark}
\theoremstyle{lemma}
\newtheorem{lem}[thm]{Lemma}
\theoremstyle{Corollary}
\theoremstyle{example}
\newtheorem{exam}[thm]{Example}
\newcommand{\thmref}[1]{Theorem~\ref{#1}}
\newcommand{\secref}[1]{Section~\ref{#1}}
\newcommand{\lemref}[1]{Lemma~\ref{#1}}
\newcommand{\propref}[1]{Proposition~\ref{#1}}
\newcommand{\defref}[1]{Definition~\ref{#1}}
\newcommand{\exref}[1]{Example~\ref{#1}}
\newcommand{\C}{{\mathbb C}}
\newcommand{\R}{{\mathbb R}}
\newcommand{\Z}{{\mathbb Z}}
\newcommand{\N}{{\mathbb N}}
\newcommand{\CE}{{\mathcal E}}
\newcommand{\id}{{\rm{id}}}
\newcommand{\fU}{{\mathfrak U}}
\newcommand{\be}{\mathbf e}
\newcommand{\br}{\mathbf r}
\newcommand{\bt}{\mathbf t}
\newcommand{\bs}{\mathbf s}
\newcommand{\de}{\delta}
\newcommand{\U}{{\rm{U}}}
\newcommand{\Sym}{{\rm{Sym}}}
\newcommand{\im}{{\rm{Im}}}
\newcommand{\Ad}{{\rm{Ad}}}
\newcommand{\ot}{\otimes}
\newcommand{\fsl}{{\mathfrak {sl}}}
\newcommand{\so}{{\mathfrak {so}}}
\newcommand{\gl}{{\mathfrak {gl}}}
\newcommand{\osp}{{\mathfrak {osp}}}
\newcommand{\Uq}{{\rm{U}}_q}
\newcommand{\Cl}{{\rm{Cl}}}
\newcommand{\Clq}{{\rm{Cl}}_q}
\newcommand{\vac}{|0\rangle}
\newcommand{\vacr}{|\br\rangle}
\newcommand{\vacs}{|\bs\rangle}
\newcommand{\vact}{|\bt\rangle}
\numberwithin{equation}{section}
\def\moverlay{\mathpalette\mov@rlay}
\def\mov@rlay#1#2{\leavevmode\vtop{%
   \baselineskip\z@skip \lineskiplimit-\maxdimen
   \ialign{\hfil$\m@th#1##$\hfil\cr#2\crcr}}}
\newcommand{\charfusion}[3][\mathord]{
    #1{\ifx#1\mathop\vphantom{#2}\fi
        \mathpalette\mov@rlay{#2\cr#3}
      }
    \ifx#1\mathop\expandafter\displaylimits\fi}
\begin{document}
\title[Spinor Representation of $\Uq(\osp(1|2n))$]{A Quantum Fermion Realisation of  the Finite Dimensional Spinor Representation of $\Uq(\osp(1|2n))$}
\author[H Y Yang]{Hengyun Yang}
\address[H Y Yang]{ Department of Mathematics, Shanghai Maritime University, Shanghai 201306, China}
\email{hyyang@shmtu.edu.cn}

\author[Y Zhang]{Yang Zhang}
\address[Y  Zhang]{School of Mathematical Sciences, University of Science and Technology of China, Hefei 230026, China}
\address[H Y Yang, Y  Zhang]{School of Mathematics and Statistics, University of Sydney, Sydney 2006, Australia}
\email{yang91@mail.ustc.edu.cn}

\begin{abstract}
The quantum supergroup $\Uq(\osp(1|2n))$ admits a finite dimensional spinor representation, which does not have a classical limit. We construct a realisation of  this representation on the Fock space of $q$-fermions. We also generalise the construction to the infinite dimensional spinor representations of $\Uq(\osp(2m+1|2n))$ for $m, n\ge 1$ by using both quantum bosons and fermions. This leads to a new realisation different from those obtained before by other researchers. 
\end{abstract}
\date{Nov 29, 2016.}
\keywords{Quantum orthosymplectic supergroup, deformed Clifford (super)algebra, spinor representations, quantum fermion realization}

\maketitle

\section{Introduction}
The Clifford algebra has a deformation  \cite{HS,H} that is compatible with the action of the quantum  group $\Uq(\so(m))$. This makes it possible to construct the quantum analogue of the spinor representation in terms of quantum fermions. The construction is similar to the classical case, where spinor representations of orthogonal Lie algebras arise from the corresponding Clifford algebras.  However, as far as we are aware, none has tried to generalise the construction to the quantum orthosymplectic supergroup \cite{BGZ, Y, ZBG91, ZGB91b} 
by using quantum fermions only.  What deterred people from doing so is the well known fact that the spinor representation  of the orthosymplectic Lie superalgebra $\osp(2m+1|2n)$ ($n>0$) is infinite dimensional. 

However, in the case of $\Uq(\osp(1|2n))$,  there exists a finite dimensional irreducible spinor 
representation, though this is not widely known. This representation does not have a classical (i.e., $q\to 1$) limit, thus its existence does not contravene the above mentioned fact.  

The aim of this note is to give a quantum fermion construction of this finite dimensional spinor representation of $\Uq(\osp(1|2n))$ (see \thmref{thm:alghomo} and \propref{PropSpinorSo}).  Our starting point is the observation that there exists an algebra homomorphism from $\Uq(\osp(1|2n))$ to $\U_{-q}(\so(2n+1))$ (cf. \cite{Z92}), which makes it possible to construct spinor representation of $\Uq(\osp(1|2n))$ using its counterpart of $\U_{-q}(\so(2n+1))$. This method fails in the general case of  $\Uq(\osp(2m+1|2n))$ for $m, n>0$ as there is no such algebra homomorphism between $\Uq(\osp(2m+1|2n))$ and any quantum orthogonal or symplectic groups; see \cite{XZ}. 

In order to generalise the construction to  $\Uq(\osp(2m+1|2n))$ for $m>0$, it is necessary to bring quantum bosons into the picture.  The quantum fermions and bosons together form a 
deformed Clifford superalgebra $\Cl_q(m|n)$, the structure of which has been studied extensively in the literature (see, e.g., \cite{DPS}).  However,  the deformed Clifford superalgebra is used  quite differently in this paper (see \secref{secQuanosp} and \defref{defnClsuper}).

Let $\Uq(\osp(2m+1|2n))$ be the quantum orthosymplectic group with Dynkin diagram shown in \eqref{eqDynDiag1} below.  One of our main results, shown in \thmref{thm-osp}, determines  an algebra homomorphism from  $\Uq(\osp(2m+1|2n))$ to deformed Clifford superalgebra $\Cl_q(m|n)$. This together with the Fock space of $\Cl_q(m,n)$ give an infinite dimensional irreducible representation of $\Uq(\osp(2m+1|2n))$ with the highest weight  
\[ (\underbrace{-1,\dots, -1}_{m-1},\underbrace{1,\dots,1}_{n},\sqrt{-1}q^{\frac{1}{2}}),
 \] 
which is referred to as spinor representation of $\Uq(\osp(2m+1|2n))$. In particular,  this reduces to the algebra homomorphism between $\Uq(\osp(1|2n))$ and deformed Clifford algebra $\Cl_q(n)$  when $m=0$, from which we obtain an  irreducible spinor representation of dimension $2^n$ for $\Uq(\osp(1|2n))$ with the  highest weight  	
\[ (\underbrace{1,1,\dots,1}_{n-1},\sqrt{-1}q^{\frac{1}{2}}),
\] 
see \thmref{thm:alghomo} and \propref{PropSpinorSo}. Of particular interest is that this spinor representation is not the deformation of any irreducible representation of $\osp(1|2n)$; see \exref{exosp12}. Comparing with  \cite[Theorem 3.1]{ZY}, we see that this is one of the integral representations of $\Uq(\osp(1|2n))$. 

 Note that  $\Uq(\gl_m)$ and  $\Uq(\osp(1|2n))$ are the regular subalgebras of $\Uq(\osp(2m+1|2n))$, and we have $\Uq(\osp(2m+1|2n))\supset \Uq(\gl_m)\otimes  \Uq(\osp(1|2n))$. In this paper, the  spinor representation of    $\Uq(\osp(2m+1|2n))$ is obtained by using  the  quantum boson realisation of  $\Uq(\gl_m)$ \cite{D} and  quantum  fermion realisation  of $\Uq(\osp(1|2n))$. However, our method can not be  applied to  the case of $\Uq(\osp(2m|2n))$,  since $\Uq(\osp(2m|2n))$ does not contain $\Uq(\osp(1|2n))$ as regular subalgebra.

\medskip
\noindent
{\bf{Relation to earlier work.}}
 Recall that there are various choices of  Borel subalgebras for the orthosymplectic Lie superalgebra $\osp(2m+1|2n)$  (see   \cite{K} and \cite{Z14}),  which are not Weyl group conjugate. They correspond to different simple root systems containing different sets of odd simple roots.   The  Dynkin diagrams of simple root systems of  $\osp(2m+1|2n)$ can be classified into the following two types:
\begin{center}
	\begin{tikzpicture}
	\node at (-2,0) {Type 1:};
	\node at (0,0) {\Large $\times$};
	\draw (0.2,0)--(1.0,0);
	\node at (1.2,0) {\Large $\times$};
	\draw (1.4,0)--(2.2,0);
	\node at (2.6,0) {\dots};
	\draw (3.0,0)--(3.8,0);	
	\node at (4.0,0) {\Large $\times$};
	\draw (4.2,0.05)--(5.0,0.05); 
	\draw (4.2,-0.05)--(5.0,-0.05);
	\node at (5.0,0) {\Large$>$};
	\draw [black,fill=black] (5.4,0) circle (0.2 cm);	
	
	\node at (0,-0.5) {\tiny $1$}; 
	\node at (1.2,-0.5) {\tiny $2$}; 
	\node at (4.0,-0.5) {\tiny $m+n-1$}; 
	\node at (5.4,-0.5) {\tiny $m+n$}; 
	\node at (6.0,-0.2){,};	
	\end{tikzpicture}
\end{center}

\begin{center}
	\begin{tikzpicture}
	\node at (-2,0) {Type 2:};
	\node at (0,0) {\Large $\times$};
	\draw (0.2,0)--(1.0,0);
	\node at (1.2,0) {\Large $\times$};
	\draw (1.4,0)--(2.2,0);
	\node at (2.6,0) {\dots};
	\draw (3.0,0)--(3.8,0);	
	\node at (4.0,0) {\Large $\times$};
	\draw (4.2,0.05)--(5.0,0.05); 
	\draw (4.2,-0.05)--(5.0,-0.05);
	\node at (5.0,0) {\Large$>$};
	\draw  (5.4,0) circle (0.2 cm);	
	
	\node at (0,-0.5) {\tiny $1$}; 
	\node at (1.2,-0.5) {\tiny $2$}; 
	\node at (4.0,-0.5) {\tiny $m+n-1$}; 
	\node at (5.4,-0.5) {\tiny $m+n$} ; 
	\node at (6.0,-0.2){,};	
	\end{tikzpicture}
\end{center}
where a node
$\circ$ corresponds to an even simple root;
$\otimes$ to an odd isotropic simple root;
$\bullet$ to an odd non-isotropic simple root, and
$\times$ stands for $\circ$ or $\otimes$, depending on whether the simple root is even or odd.
In particular, let  $\Theta_1=\{m,m+n\}$ be the index set for odd simple roots, then  the associated  Dynkin diagram  for $(\osp(2m+1|2n), \Theta_1 )$ is 
\begin{equation}\label{eqDynDiag1}
\begin{aligned}
  \begin{tikzpicture}
  \draw (0,0) circle (0.2 cm);
  \draw (0.2,0)--(0.8,0);
  \node at (1.2,0) {...};
  \draw (1.6,0)--(2.2,0);				
  \draw (2.4,0) circle (0.2 cm);
  \draw (2.6,0)--(3.4,0);       
  \node at (3.6,0) {\Large$\otimes$};
  \draw (3.8,0)--(4.6,0);
  \draw (4.8,0) circle (0.2 cm);
  \draw (5.0,0)--(5.6,0);
  \node at (6.0,0) {...};	
  \draw (6.4,0)--(7.0,0);	
  \draw (7.2,0) circle (0.2 cm);		
  \draw (7.4,0.05)--(8.2,0.05); 
  \draw (7.4,-0.05)--(8.2,-0.05);
  \node at (8.2,0) {\Large$>$};		 
  
  \draw [black,fill=black] (8.6,0) circle (0.2 cm);
  
  \node at (0,-0.5) {\tiny $1$}; 
  \node at (2.4,-0.5) {\tiny $m-1$}; 
  \node at (3.6,-0.5) {\tiny $m$}; 
  \node at (4.8,-0.5) {\tiny $m+1$}; 
  \node at (7.2,-0.5) {\tiny $m+n-1$}; 
  \node at (8.6,-0.5) {\tiny $m+n$}; 	
  \node at (9.3,-0.2) {.}; 		
  \end{tikzpicture}
\end{aligned}
\end{equation}
Another case,  denoted by  $(\osp(2m+1|2n),\Theta_2)$  with index set  $\Theta_2=\{n\}$ for odd simple roots, corresponds to the following  Dynkin diagram  
\begin{equation}\label{eqDynDiag2}
  \begin{aligned}
  \begin{tikzpicture}
  \draw (0,0) circle (0.2 cm);
  \draw (0.2,0)--(0.8,0);
  \node at (1.2,0) {...};
  \draw (1.6,0)--(2.2,0);				
  \draw (2.4,0) circle (0.2 cm);
  \draw (2.6,0)--(3.4,0);       
  \node at (3.6,0) {\Large$\otimes$};
  \draw (3.8,0)--(4.6,0);
  \draw (4.8,0) circle (0.2 cm);
  \draw (5.0,0)--(5.6,0);
  \node at (6.0,0) {...};	
  \draw (6.4,0)--(7.0,0);	
  \draw (7.2,0) circle (0.2 cm);		
  \draw (7.4,0.05)--(8.2,0.05); 
  \draw (7.4,-0.05)--(8.2,-0.05);
  \node at (8.2,0) {\Large$>$};		 
  
  \draw (8.6,0) circle (0.2 cm);
  \node at (0,-0.5) {\tiny $1$}; 	
  \node at (2.4,-0.5) {\tiny $n-1$};
  \node at (3.6,-0.5) {\tiny $n$};
  \node at (4.8,-0.5) {\tiny $n+1$};
  \node at (7.2,-0.5) {\tiny $m+n-1$}; 	
  \node at (8.6,-0.5) {\tiny $m+n$}; 
  \node at (9.3,-0.2) {.}; 	 	 			
  \end{tikzpicture}
  \end{aligned}
\end{equation}

 The quantum boson-fermion  realisation of  $\Uq(\osp(2m+1|2n),\Theta_2)$  and quantum fermion-boson  realisation of   $\Uq(\osp(2m+1|2n),\Theta_1)$ were studied in \cite{FSV},  while \cite{DPS}  constucted  the irreducible representations of  $\Uq(\osp(2m+1|2n),\Theta_2 )$. In this paper,  our definitions of $\Uq(\osp(2m+1|2n),\Theta_1)$ and $\Cl_q(m,n)$ are  different from the set up of \cite{DPS}, though our approach is similar to theirs. This leads to a new realisation of $\Uq(2m+1|2n,\Theta_1)$ (see \thmref{thm:alghomo} and \thmref{thm-osp}) different from those obtained before. In  \secref{secgen}, we shall generalise our method to obtain the realisation and spinor representation of $\Uq(2m+1|2n,\Theta)$ with arbitrary set $\Theta$ of odd simple roots; see \thmref{thmpigen} and \propref{propspingen}.
 
This paper is arranged as follows. In \secref{secosp12n},  we consider the quantum supergroup $\Uq(\osp(1|2n),\Theta_1)$ ($m=0$ case) associated with the Dynkin diagram \eqref{eqDynDiag1}. Using the deformed Clifford algebra $\Cl_{q}(n)$ and its finite dimensional Fock space $V_q(n)$, we construct the irreducible spinor representation of  $\Uq(\osp(1|2n),\Theta_1)$. In \secref{secospgen}, we generalise this method to the case $\Uq(\osp(2m+1|2n),\Theta_1)$, and obtain  the infinite dimensional irreducible spinor representation of $\Uq(\osp(2m+1|2n),\Theta_1)$, of which the underlying space is the Fock space $V_q(m,n)$ associated to the deformed Clifford algebra $\Clq(m,n)$. More generally, we give the  realisation of $\Uq(\osp(2m+1|2n),\Theta)$  for any set $\Theta$ of odd simple roots, and hence obtain the corresponding spinor representation of $\Uq(\osp(2m+1|2n),\Theta)$.

\section{Spinor Representation of $\Uq(\osp(1|2n),\Theta_1)$  }\label{secosp12n}

We shall construct a quantum fermion realisation of  the finite dimensional spinor  representation of $\Uq(\osp(1|2n),\Theta_1)$ in this section. We first recall the general  definition of quantum  orthosymplectic supergroup $\Uq(\osp(2m+1|2n),\Theta_1)$. Then we introduce the deformed  Clifford algebra  $\Clq(n)$ furnished with  the Fock space  $V_q(n)$. As well as  the classical case, we   present an algebra homomorphism from $\U_{q}(\osp(1|2n),\Theta_1)$ to $\Cl_{q}(n)$, which in turn gives us the  irreducible spinor representation $V_q(n)$ of $\U_{q}(\osp(1|2n),\Theta_1)$. 

\subsection{Quantum orthosymplectic supergroup $\Uq(\osp(2m+1|2n),\Theta_1)$}\label{secQuanosp}
Recall that $\Theta_1=\{m,m+n\}$ with corresponding Dynkin diagram shown in \eqref{eqDynDiag1}. 
Denote the  symmetrisable $(m+n)\times (m+n)$-Cartan matrix  $A=(a_{ij})$ of $\Uq(\osp(2m+1|2n),\Theta_1)$   by
\[
A=\begin{bmatrix}
A_{m-1} &   &  &   & &  &\\
&   & -1 &  &  &  &  \\
& -1 & 0 & 1 &   &   & \\
&   & -1 &  &   &  &  \\
&   &   &   &  A_{n-1} &  & \\
&   &   &  &  &  & -1 \\
&   &   &  & & -2& 2 \\
\end{bmatrix},
\]
where 
\[
A_n=\begin{bmatrix}
2 & -1  &  &   & \\
-1& 2  &  &  &    \\
& & \ddots  &  &   \\
&   &  &  2 & -1    \\
&   &   &  -1 & 2
\end{bmatrix}
\]stands for the $n\times n$ Cartan matrix of the general linear Lie algebra. 
Define the sequence 
\[(d_1\dots,d_m,d_{m+1},\dots, d_{m+n-1},d_{m+n})=(-1,\dots,-1,1,\dots,1,\frac{1}{2})  \]
such that  $d_{i}a_{ij}=d_ja_{ji}$. Let $q$ be an indeterminate over the complex field $\C$, and let $q_i=q^{d_i}$.

For any superalgebra $A=A_{\bar{0}}\oplus A_{\bar{1}}$, we denote by $[\;]: A\rightarrow \Z_2$ the parity functor, i.e., $[a]=\bar{0}$ if $a\in A_{\bar{0}}$ and  $[a]=\bar{1}$ if $a\in A_{\bar{1}}$.

\begin{defn}
	The quantum supergroup  $\Uq(\osp(2m+1|2n),\Theta_1)$ \cite{BGZ, Y, ZBG91, ZGB91b}  is an associative superalgebra over $\C(q^{\frac{1}{2}})$  generated by  $e_i,f_i,k_i^{\pm 1} (i=1,2,\dots, m+n)$ with the $\Z_2$-grading
	\[ 
	\begin{aligned}
	&[e_i]= [f_i]=\bar{0}
	, \quad i\notin \Theta_1,\quad [k_j]=\bar{0},\quad j=1,2, \dots, m+n,\\
	&[e_s]=[f_s]=\bar{1}, \quad s\in \Theta_1,
	\end{aligned}
	\]
	subject to the following relations
	\begin{align}
	k_ik_i^{-1}=k_i^{-1}k_i&=1,\quad k_ik_j=k_jk_i,\label{eq: quandef1}\\  
	k_ie_jk_i^{-1}=q_i^{a_{ij}}e_j&,\quad k_if_jk_i^{-1}=q_i^{-a_{ij}}f_j, \label{eq: quandef2}\\
	e_if_j-(-1)^{[e_i][f_j]}&f_je_i=\delta_{ij}\frac{k_i-k_i^{-1}}{q-q^{-1}}, \label{eq: quandef3}\\
	e_m^2&=f_m^2=0  ,  &  \label{eq: quandef4}\\
	(\Ad_{e_i})^{1-a_{ij}}(e_j)= (\Ad&_{f_i})^{1-a_{ij}}(f_j)=0\quad    \text{for} ~ a_{ii}\neq 0, i\neq j, \label{eq: quandef5}\\
	\Ad_{e_m}\Ad_{e_{m-1}}\Ad_{e_m}(e_{m+1})&=0,\quad    \Ad_{f_m}\Ad_{f_{m-1}}\Ad_{f_m}(f_{m+1})=0, \label{eq: quandef6}
	\end{align}
	where   $\Ad_{e_i}(x)$ and $\Ad_{f_i}(x)$ are defined by
	\begin{equation}
	\Ad_{e_i}(x)=e_ix-(-1)^{[e_i][x]}k_ixk_i^{-1}e_i,\quad \Ad_{f_i}(x)=f_ix-(-1)^{[f_i][x]}k_i^{-1}xk_if_i,
	\end{equation}
\end{defn}

Relations \eqref{eq: quandef4}-\eqref{eq: quandef5} and \eqref{eq: quandef6} are referred to as quantum Serre relations and higher order quantum Serre relations (see \cite{Y} or \cite{XZ}), respectively . 

\begin{rmk}
Note that the standard expression $\frac{k_i-k_i^{-1}}{q_i-q_i^{-1}}$ is replaced by $\frac{k_i-k_i^{-1}}{q-q^{-1}}$ in \eqref{eq: quandef3}.  As a result, $q^{\pm \frac{1}{2}}$ never appears in our definition. One can also define $\Uq(\osp(2m+1|2n),\Theta_1)$ over the field $\C(q)$, but we shall work over $\C(q^{\frac{1}{2}})$ in order to construct the spinor representations of $\Uq(\osp(2m+1|2n),\Theta_1)$.	
\end{rmk}

In the case of  $\Uq(\osp(1|2n),\Theta_1)$ (here $ \Theta_1=\{n\}$), the  Cartan matrix $A=(a_{ij})_{n\times n}$ is
\[A=\begin{bmatrix}
	A_{n-2}   &  & &  \\
	&     & 2& -1 \\
	&    & -2& 2 
\end{bmatrix},
\]
and  $(d_1,\dots,d_{n-1},d_n)=(1,\dots,1,\frac{1}{2})$ satisfies $d_ia_{ij}=d_ja_{ji}$. The associative superalgebra  $\Uq(\osp(1|2n),\Theta_1)$ is generated by  $e_i,f_i,k_i^{\pm 1}$ ( $i=1,2,\dots, n$) with the $\Z_2$-grading 
\[   
\begin{aligned}
 [e_i]=[f_i]=\bar{0}, \quad i\neq n,\quad [k_j]=\bar{0},\quad j=1,2,\dots, n, \quad
 [e_n]= [f_n]=\bar{1},
\end{aligned}
\]
satisfying relations (\ref{eq: quandef1})-(\ref{eq: quandef3}) and  (\ref{eq: quandef5}). The quantum Serre relations (\ref{eq: quandef5}) for generators $e_i$ ($i=1,2,\dots, n$)  can be written more explicitly as
\begin{align}
 e_ie_j-e_je_i=0,    &      \quad |i-j|\ge 2, \label{def-osp2n-1}\\
 e_{i\pm 1}^2e_i-(q+q^{-1}) e_{i\pm 1}&e_ie_{i\pm 1}+ e_ie_{i\pm 1}^2=0,\quad i\pm 1\neq n,\label{def-osp2n-2}\\
 e^3_ne_{n-1}+(1-q-q^{-1})(e^2_n&e_{n-1}e_n+e_ne_{n-1}e^2_n)+e_{n-1}e^3_n=0 \label{def-osp2n-3}.
\end{align}   
Similarly, replacing $e_i$ by $f_i$  in above equations yields quantum Serre relations for generators $f_i$  ($i=1,2,\dots, n$).

\subsection{Deformed  Clifford algebra  $\Clq(n)$}
Let $n$ be a positive integer. We define the deformed Clifford algebra $\Clq(n)$ over $\C(q^{\frac{1}{2}})$ with fermionic  generators $\psi_i,\psi_i^{\dagger}, v_i^{\pm 1} (i=1,2,\dots, n)$ by the following defining relations
\begin{align}
 v_iv_j=v_jv_i,&\quad  v_iv_i^{-1}=v_i^{-1}v_i=1,\label{def-Vq1}\\
 v_i\psi_jv_i^{-1}=q^{\delta_{ij}}&\psi_j,\quad v_i\psi_j^{\dagger}v_i^{-1}=q^{-\delta_{ij}}\psi_j^{\dagger},\label{def-Vq2}\\
 \psi_i\psi_j+\psi_j\psi_i&=\psi_i^{\dagger}\psi_j^{\dagger}+\psi_j^{\dagger}\psi_i^{\dagger}=0,  \label{def-Vq3}\\
 \psi_i\psi_j^{\dagger}+&\psi_j^{\dagger}\psi_i=0, \quad i\neq j, \label{def-Vq4}\\
 \psi_i\psi_i^{\dagger}+q\psi_i^{\dagger}\psi_i=&v_i^{-1},\quad \psi_i\psi_i^{\dagger}+q^{-1}\psi_i^{\dagger}\psi_i=v_i.\label{def-Vq5}
\end{align}
Note that relations  \eqref{def-Vq5} are equivalent to the following
\begin{equation} \label{eq:om1} \tag{\ref{def-Vq5}$'$}
 \psi_i\psi_i^{\dagger}=\frac{qv_i-(qv_i)^{-1}}{q-q^{-1}},\quad \psi_i^{\dagger}\psi_i=-\frac{v_i-v_i^{-1}}{q-q^{-1}}.
\end{equation}

\begin{lem}\label{lem-1}
For any $1\le i,j \le n-1$, the following equations hold in $\Clq(n)$
\begin{align}
    & [\psi_i\psi_{i+1}^{\dagger},\psi_{n}^{\dagger}]= [\psi_{i+1}\psi_i^{\dagger},\psi_{n}^{\dagger}]=0,\label{lem-1-12}\\
	& [\psi_i\psi_{i+1}^{\dagger},\psi_{j+1}\psi_j^{\dagger}]=\de_{i,j} \frac{v_iv_{i+1}^{-1}-(v_iv_{i+1}^{-1})^{-1}}{q-q^{-1}},\label{lem-1-1}\\
    & \psi_n\psi_n^{\dagger}+ \psi_n^{\dagger}\psi_n= \frac{q^{\frac{1}{2}}v_n+q^{-\frac{1}{2}}v_n^{-1}}{q^{\frac{1}{2}}+q^{-\frac{1}{2}}}. \label{lem-1-13}
\end{align}
\end{lem}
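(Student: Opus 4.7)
The plan is to verify the three identities directly from the defining relations \eqref{def-Vq1}--\eqref{def-Vq5} (equivalently, \eqref{eq:om1}) by case analysis on how the fermionic indices coincide. Since each of $\psi_k,\psi_k^\dagger$ is odd, every product $\psi_i\psi_{i+1}^\dagger$ and $\psi_{j+1}\psi_j^\dagger$ is even, so both brackets in \eqref{lem-1-12} and \eqref{lem-1-1} are ordinary commutators.

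For \eqref{lem-1-12}, when $i+1<n$ the three fermions $\psi_i,\psi_{i+1}^\dagger,\psi_n^\dagger$ have pairwise distinct indices, so each pair anticommutes by \eqref{def-Vq3}--\eqref{def-Vq4}; two swaps yield the sign $(-1)^2=1$ and the commutator vanishes. When $i+1=n$, after at most one anticommutation each summand contains $(\psi_n^\dagger)^2$, which vanishes by \eqref{def-Vq3}. The identity $[\psi_{i+1}\psi_i^\dagger,\psi_n^\dagger]=0$ is handled identically.

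For \eqref{lem-1-1}, I split into three cases. If $|i-j|\ge 2$ all four indices are distinct, each pair of fermions anticommutes, and four swaps produce a trivial sign, so the commutator is zero. If $|i-j|=1$, say $j=i+1$, the index $i+1$ is shared and a single anticommutation exhibits $(\psi_{i+1}^\dagger)^2=0$ in each of the two summands; the case $j=i-1$ is symmetric, using $\psi_i^2=0$. For $i=j$, I rewrite
\[ \psi_i\psi_{i+1}^\dagger\,\psi_{i+1}\psi_i^\dagger=\psi_i(\psi_{i+1}^\dagger\psi_{i+1})\psi_i^\dagger,\qquad \psi_{i+1}\psi_i^\dagger\,\psi_i\psi_{i+1}^\dagger=\psi_{i+1}(\psi_i^\dagger\psi_i)\psi_{i+1}^\dagger, \]
use \eqref{eq:om1} to replace each inner product by a scalar in $v_i^{\pm 1},v_{i+1}^{\pm 1}$ (which passes through the remaining fermions freely by \eqref{def-Vq2}), and then apply \eqref{eq:om1} once more to $\psi_i\psi_i^\dagger$ and $\psi_{i+1}\psi_{i+1}^\dagger$. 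Expanding the difference, the eight monomials in $v_i^{\pm 1}v_{i+1}^{\pm 1}$ collapse to $(q-q^{-1})(v_iv_{i+1}^{-1}-v_i^{-1}v_{i+1})$, and dividing by $(q-q^{-1})^2$ gives the right-hand side of \eqref{lem-1-1}.

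Finally, \eqref{lem-1-13} is immediate from \eqref{eq:om1} at index $n$: summing the two expressions yields $\psi_n\psi_n^\dagger+\psi_n^\dagger\psi_n=\frac{qv_n-q^{-1}v_n^{-1}-v_n+v_n^{-1}}{q-q^{-1}}=\frac{qv_n+v_n^{-1}}{q+1}$, and multiplying numerator and denominator by $q^{-1/2}$ rewrites this as the symmetric form on the right of \eqref{lem-1-13}. The main obstacle is the bookkeeping in the $i=j$ case of \eqref{lem-1-1}, where one must track which $v_k^{\pm 1}$ sits on which side of which fermion and verify the eight-term cancellation in the $v$-polynomials; everything else reduces to routine use of the anticommutation relations \eqref{def-Vq3}--\eqref{def-Vq4}.
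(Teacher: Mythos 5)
Your treatment of \eqref{lem-1-1} and \eqref{lem-1-13} is correct and is essentially the paper's own argument: the off-diagonal cases by anticommutation and nilpotency, the $i=j$ case by pulling the two number operators out via \eqref{eq:om1} and checking the cancellation of the resulting Laurent monomials in $v_i^{\pm1}v_{i+1}^{\pm1}$, and \eqref{lem-1-13} by direct substitution from \eqref{eq:om1}.

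There is, however, a genuine gap in your handling of the second identity of \eqref{lem-1-12} when $i=n-1$. You claim it is ``handled identically'' to the first one because each summand exhibits $(\psi_n^\dagger)^2$. But for $[\psi_n\psi_{n-1}^\dagger,\psi_n^\dagger]$ the two summands are $\psi_n\psi_{n-1}^\dagger\psi_n^\dagger$ and $\psi_n^\dagger\psi_n\psi_{n-1}^\dagger$: each contains one $\psi_n$ and one $\psi_n^\dagger$, so no square of $\psi_n^\dagger$ ever appears, and $\psi_n$ and $\psi_n^\dagger$ do \emph{not} anticommute (they obey \eqref{def-Vq5}). Carrying out the computation one finds
\[
[\psi_n\psi_{n-1}^\dagger,\psi_n^\dagger]=-\bigl(\psi_n\psi_n^\dagger+\psi_n^\dagger\psi_n\bigr)\psi_{n-1}^\dagger,
\]
which by \eqref{lem-1-13} is nonzero (e.g.\ it sends $\vac$ to $-|\be_{n-1}\rangle$ in the Fock space). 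So the identity as printed actually fails in this case; the statement that holds, and that is what \thmref{thm:alghomo} needs for $\pi(e_n)\pi(f_i)=\pi(f_i)\pi(e_n)$, is $[\psi_{i+1}\psi_i^\dagger,\psi_n]=0$, where the argument goes through because $\psi_n^2=0$. You should either restrict the second equality of \eqref{lem-1-12} to $1\le i\le n-2$ or replace $\psi_n^\dagger$ by $\psi_n$ there; in either form your anticommutation-plus-nilpotency argument then works. (The paper's own one-line justification ``obvious by \eqref{def-Vq3}'' glosses over the same point.)
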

\begin{proof} Equations \eqref{lem-1-12} are obvious by (\ref{def-Vq3}). Equations  (\ref{lem-1-1}) follow easily from (\ref{def-Vq3}) and (\ref{def-Vq4}) whenever $i\neq j$. For $i=j$, using  (\ref{def-Vq2}) and ( \ref{eq:om1}), we obtain
\[  
  \begin{aligned}
     &\psi_i\psi_{i+1}^{\dagger}\psi_{i+1}\psi_i^{\dagger}-\psi_{i+1}\psi_i^{\dagger}\psi_i\psi_{i+1}^{\dagger}=\psi_i\psi_{i}^{\dagger}\psi_{i+1}^{\dagger}\psi_{i+1}-\psi_i^{\dagger}\psi_{i}\psi_{i+1}\psi_{i+1}^{\dagger}\\
    =&\frac{1}{(q-q^{-1})^2}((v_{i+1}^{-1}-v_{i+1})(qv_i-q^{-1}v_i^{-1})+(v_i-v_i^{-1})(qv_{i+1}-q^{-1}v_{i+1}^{-1}))\\
    =& \frac{v_iv_{i+1}^{-1}-v_i^{-1}v_{i+1}}{q-q^{-1}} .
  \end{aligned}   
\]
Thus  (\ref{lem-1-1}) holds for  $i=j$.
Finally, by (\ref{eq:om1}) we have 
$$\psi_n\psi_n^{\dagger}+ \psi_n^{\dagger}\psi_n=\frac{qv_n-(qv_n)^{-1}}{q-q^{-1}}-\frac{v_n-v_n^{-1}}{q-q^{-1}}= \frac{q^{\frac{1}{2}}v_n+q^{-\frac{1}{2}}v_n^{-1}}{q^{\frac{1}{2}}+q^{-\frac{1}{2}}}$$
as required in \eqref{lem-1-13}.
\end{proof}

 We now construct the Fock space $V_q(n)$ for $\Clq(n)$. The vacuum vector $\vac\in V_q(n)$ is defined in the usual way
\begin{equation*}
 \psi_i\vac=0,\quad v_i\vac=\vac,\quad i=1,2,\dots,n.
\end{equation*}
As a basis for $V_q(n)$, we set
\begin{equation*}
   \vacr:=(\psi_1^{\dagger})^{r_1}(\psi_2^{\dagger})^{r_2}\dots(\psi_n^{\dagger})^{r_n}\vac,
\end{equation*}
where $\br=(r_1,r_2,\dots,r_n)$ with $r_i\in\{0,1\}$.  Note that $V_q(n) $ is $\Z_{+}$-graded with the degree assignments $\deg \psi^{\dagger}_{i}=1$ for all $1\leq i\leq n$ (here $\Z_{+}$ denotes the set of non-negative integers). We set $\deg \vacr=\sum_{i=1}^{n}r_i$  and order the basis elements lexicographically, i.e.,
\begin{equation}\label{eqlex}
\vacr \succ \vacs, \quad \text{if} \  r_i>s_i\  \text{for the first}\ i \ \text{where}\ \ r_i\ \text{and }\  s_i\ \text{differ}.  
\end{equation}
For notational convenience, we denote by $\be_i$ the $n$-tuple with 1 in the $i$-th position and 0 elsewhere.

\begin{prop}\label{prop:irrcl}
	The Fock space $V_q(n)$  is an irreducible $\Clq(n)$ module of dimension $2^n$ under the action
	\begin{equation}\label{Fock-1}
	\begin{aligned}
	v_i\vacr&=q^{-r_i}\vacr,\\
	\psi_i\vacr&=(-1)^{r_1+\cdots+r_{i-1}}|\br-\be_i\rangle,\\
	\psi_i^{\dagger}\vacr&=(-1)^{r_1+\cdots+r_{i-1}}|\br+\be_i\rangle,\quad i=1,2,\dots, n,
	\end{aligned}	
	\end{equation}
	where $|\br\pm\be_{i}\rangle=0$ if $r_i\pm 1\notin \{0,1\}$.
\end{prop}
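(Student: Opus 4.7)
The plan is to proceed in three stages: (i) check that the formulas in \eqref{Fock-1} define a module structure, i.e.\ that they respect the defining relations \eqref{def-Vq1}--\eqref{def-Vq5} of $\Clq(n)$; (ii) observe that $\dim V_q(n)=2^n$ directly from the stated basis; and (iii) prove irreducibility by a weight-space / cyclic-vector argument.

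For step (i), everything reduces to sign and eigenvalue bookkeeping on the basis $\{\vacr\}_{\br\in\{0,1\}^n}$. The $v_i$ act diagonally with eigenvalue $q^{-r_i}$, so \eqref{def-Vq1} is immediate and \eqref{def-Vq2} follows from noting that $\psi_j$ decreases $r_j$ by $1$ (raising the $v_j$-eigenvalue by $q$) while $\psi_j^\dagger$ increases it. The relations \eqref{def-Vq3}--\eqref{def-Vq4} are pure Koszul-sign verifications: the factor $(-1)^{r_1+\cdots+r_{i-1}}$ in \eqref{Fock-1} is arranged so that two successive applications of $\psi_i$'s and $\psi_j^\dagger$'s for distinct indices pick up opposite signs, and a single application of $\psi_i^2$ or $(\psi_i^\dagger)^2$ annihilates since $|\br\pm\be_i\rangle=0$ when $r_i\pm 1\notin\{0,1\}$. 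For \eqref{def-Vq5}, one computes on $\vacr$ in the two cases $r_i=0,1$: the signs from the prefactors cancel, and one gets $\psi_i\psi_i^\dagger\vacr=(1-r_i)\vacr$ and $\psi_i^\dagger\psi_i\vacr=r_i\vacr$; combining with $v_i\vacr=q^{-r_i}\vacr$, both identities in \eqref{def-Vq5} reduce to the arithmetic checks $q^{r_i}=(1-r_i)+q\,r_i$ and $q^{-r_i}=(1-r_i)+q^{-1}r_i$ for $r_i\in\{0,1\}$, which hold trivially.

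For step (ii), the basis $\{\vacr\}$ is indexed by $\br\in\{0,1\}^n$, giving exactly $2^n$ vectors; their linear independence follows either by construction or a posteriori from the fact that they are simultaneous eigenvectors of the pairwise commuting $v_1,\dots,v_n$ with pairwise distinct eigenvalue tuples $(q^{-r_1},\dots,q^{-r_n})$.

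For step (iii), let $W\subseteq V_q(n)$ be a nonzero submodule. Since $v_1,\dots,v_n$ commute and act semisimply with distinct simultaneous eigenvalues on the one-dimensional subspaces $\C(q^{1/2})\vacr$, any nonzero element of $W$ can be projected, using polynomials in the $v_i$'s, onto a single basis vector $\vacr$. Hence some $\vacr\in W$. Applying $\psi_i$ for those $i$ with $r_i=1$ (in decreasing order of $i$) successively lowers the weight and eventually produces $\vac\in W$ up to a nonzero sign. Finally, applying $\psi_j^\dagger$ for any $\bs\in\{0,1\}^n$ in the appropriate order shows $\vacs\in W$ for every $\bs$, so $W=V_q(n)$.

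The main obstacle I anticipate is not conceptual but bookkeeping: getting the Koszul signs $(-1)^{r_1+\cdots+r_{i-1}}$ to produce the anticommutation relations \eqref{def-Vq3}--\eqref{def-Vq4} correctly in all index orderings, and simultaneously making \eqref{def-Vq5} come out with the correct $q$ versus $q^{-1}$ coefficients. Once these sign/eigenvalue identities are written out in the two cases $r_i=0,1$, the remainder of the argument is structural.
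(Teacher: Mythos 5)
Your proof is correct, and for the verification of the module structure and the dimension count it matches what the paper takes for granted (the paper simply asserts that the action follows from the defining relations). Where you genuinely diverge is in the irreducibility argument. The paper picks a nonzero $w=\sum_{\br}a_{\br}\vacr$ in a submodule $W_q$, takes its leading term $a_{\bs}\vacs$ with respect to the lexicographic order \eqref{eqlex}, and applies the single monomial $(\psi_1)^{s_1}\cdots(\psi_n)^{s_n}$ to $w$: maximality of $\vacs$ forces every non-leading term to be annihilated (each such $\vacr$ has $r_i=0<s_i=1$ at the first index where it differs from $\bs$), so the result is $\pm a_{\bs}\vac\in W_q$, and the vacuum then generates everything. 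You instead first isolate a single basis vector in $W$ by projecting onto a joint eigenspace of the commuting semisimple operators $v_1,\dots,v_n$, which is legitimate because the eigenvalue tuples $(q^{-r_1},\dots,q^{-r_n})$ are pairwise distinct ($q$ being an indeterminate), and only then descend to $\vac$ with the $\psi_i$'s. Your route is slightly more robust --- it does not require checking that the lowering monomial kills all non-leading terms --- at the cost of invoking interpolation in the $v_i$'s; the paper's route is more elementary and is the one it reuses (with induction on degree) in Propositions \ref{PropSpinorSo} and \ref{PropSpinor-osp}, where the $k_i$-eigenvalues alone would no longer separate all basis vectors. Both arguments are complete and correct.
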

\begin{proof}
	The action \eqref{Fock-1} follows immediately from the defining relations for $\Clq(n)$, so it remains to prove the irreducibility of $V_q(n)$.  Assume that  $W_q\subseteq V_q(n)$ is a nonzero submodule and  $w=\sum_{\br}a_{\br}\vacr (a_{\br}\in\C)$ is a nonzero element in $W_q$ with the leading term $a_{\bs}\vacs$ such that $a_{\bs}\neq 0$ and  $\vacs$ is maximal with respect to the lexicographic ordering.
	Then $(\psi_1)^{s_1}\cdots(\psi_n)^{s_n}w=\pm a_{\bs}\vac\in W_q$, which is nonzero and hence $W_q=V_q(n)$. 
\end{proof}



\subsection{Spinor representation of  $\U_{q}(\osp(1|2n),\Theta_1)$}
We proceed with the construction of   spinor representation of  $\U_{q}(\osp(1|2n),\Theta_1)$. 

\begin{thm}\label{thm:alghomo}
	The $\C(q^{\frac{1}{2}})$-linear map $\pi: \Uq(\osp(1|2n),\Theta_1) \rightarrow  \Cl_{q}(n)$ assigned by
	\[\begin{aligned}
	&e_i \mapsto \psi_{i}\psi_{i+1}^{\dagger},\quad  f_i \mapsto \psi_{i+1}\psi_i^{\dagger}\quad k_i\mapsto v_i v_{i+1}^{-1},\quad  i=1,2, \dots, n-1, \\
	& e_n \mapsto \sqrt{-1}(q^{\frac{1}{2}}-q^{-\frac{1}{2}})^{-1}\psi_n,\quad f_n\mapsto \psi_n^{\dagger},\quad k_n\mapsto \sqrt{-1} q^{\frac{1}{2}}v_n,
	\end{aligned}\]	
  is an associative superalgebra homomorphism, where $\sqrt{-1}$ is the imaginary unit.
\end{thm}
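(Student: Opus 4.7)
The plan is to verify that $\pi$ respects each defining relation of $\Uq(\osp(1|2n),\Theta_1)$ on generators, namely \eqref{eq: quandef1}--\eqref{eq: quandef3} together with the quantum Serre relations \eqref{def-osp2n-1}--\eqref{def-osp2n-3}. Before starting I would note that parity is preserved: $\pi(e_n)$ and $\pi(f_n)$ are scalar multiples of the odd generators $\psi_n,\psi_n^{\dagger}$, while $\pi(e_i),\pi(f_i)$ for $i<n$ and every $\pi(k_j)$ are products of an even number of odd fermions.

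The Cartan relations \eqref{eq: quandef1} are immediate from \eqref{def-Vq1}. For \eqref{eq: quandef2} I would compute directly from \eqref{def-Vq2}: for $i,j<n$ one gets $\pi(k_i)\pi(e_j)\pi(k_i)^{-1}=q^{\delta_{ij}-\delta_{i,j+1}-\delta_{i+1,j}+\delta_{i+1,j+1}}\pi(e_j)$, and a short case check confirms the exponent equals $a_{ij}$. The cases involving the index $n$ are analogous: the scalar prefactors $\sqrt{-1}q^{1/2}$ and $\sqrt{-1}(q^{1/2}-q^{-1/2})^{-1}$ cancel under conjugation, and the remaining exponents match $q_i^{a_{in}}$ and $q_n^{a_{nj}}$, with $q_n=q^{1/2}$ absorbing the half-power of $q$. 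The analogues for $f_j$ are symmetric.

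The heart of the proof is relation \eqref{eq: quandef3}, which is essentially done by \lemref{lem-1}. For $1\le i,j\le n-1$, formula \eqref{lem-1-1} yields directly $[\pi(e_i),\pi(f_j)]=\delta_{ij}\frac{\pi(k_i)-\pi(k_i)^{-1}}{q-q^{-1}}$; the mixed cases with exactly one index equal to $n$ follow from \eqref{lem-1-12} when the other index is $<n-1$, and from a short direct check using $\psi_n^2=0$ when it equals $n-1$. The decisive case is $i=j=n$: here $[e_n][f_n]=\bar 1$ turns the relation into an anticommutator, and one computes
\[
\pi(e_n)\pi(f_n)+\pi(f_n)\pi(e_n)=\sqrt{-1}(q^{1/2}-q^{-1/2})^{-1}(\psi_n\psi_n^{\dagger}+\psi_n^{\dagger}\psi_n).
\]
Substituting \eqref{lem-1-13} and using $(\sqrt{-1})^{-1}=-\sqrt{-1}$ together with the factorisation $q-q^{-1}=(q^{1/2}-q^{-1/2})(q^{1/2}+q^{-1/2})$, both this expression and $\frac{\pi(k_n)-\pi(k_n)^{-1}}{q-q^{-1}}$ reduce to $\frac{\sqrt{-1}(q^{1/2}v_n+q^{-1/2}v_n^{-1})}{q-q^{-1}}$. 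This calibration is the one place the imaginary unit and the half-power of $q$ are forced, already foreshadowing why the representation has no classical limit.

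For the quantum Serre relations, \eqref{def-osp2n-1} and \eqref{def-osp2n-2} only involve generators $\pi(e_i)=\psi_i\psi_{i+1}^{\dagger}$ with $i<n$, hence reduce either to the known quantum boson-type realisation of $\Uq(\gl_n)$ or to a direct four-fermion rearrangement via \eqref{def-Vq3}--\eqref{def-Vq4}; the cases mixing $\pi(e_n)$ with $\pi(e_i)$ for $|i-n|\ge 2$ follow likewise since $\psi_n$ anticommutes cleanly with fermions of other indices. The higher order Serre relation \eqref{def-osp2n-3} is actually the shortest to verify: since $\psi_n^2=0$ by \eqref{def-Vq3} taken at $i=j=n$, we have $\pi(e_n)^2=0$, so every term in \eqref{def-osp2n-3} contains a factor of $\pi(e_n)^2$ and vanishes identically. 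The $f$-analogues are symmetric. The main obstacle throughout is the careful bookkeeping in the $(n,n)$ case of \eqref{eq: quandef3}, but the factorisation of $q-q^{-1}$ and the identity $(\sqrt{-1})^{-1}=-\sqrt{-1}$ dissolve the apparent mismatch.
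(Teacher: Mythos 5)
Your proposal is correct and follows essentially the same route as the paper: verify the Cartan relations \eqref{eq: quandef1}--\eqref{eq: quandef2} directly, reduce \eqref{eq: quandef3} to \lemref{lem-1} (your explicit calibration of the $i=j=n$ anticommutator via \eqref{lem-1-13} and the factorisation $q-q^{-1}=(q^{\frac12}-q^{-\frac12})(q^{\frac12}+q^{-\frac12})$ is exactly what the lemma encodes), and kill the higher order relation \eqref{def-osp2n-3} with $\psi_n^2=0$. The only presentational difference is that the paper verifies \eqref{def-osp2n-2} by the explicit two-step computation $\pi(e_{i-1}e_i-qe_ie_{i-1})=v_i^{-1}\psi_{i-1}\psi_{i+1}^{\dagger}$ rather than citing the known fermionic realisation of $\Uq(\gl_n)$.
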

\begin{proof}
	We first show that the assignments of generators preserve the relations (\ref{eq: quandef1})-(\ref{eq: quandef3})  for $1\leq i,j\leq n$.
	Relations (\ref{eq: quandef1}) directly follow from (\ref{def-Vq1}). Relations (\ref{eq: quandef2}) are obvious whenever $|i-j|\ge 2$, while in the case of $j=i+1$ we have for the first relations of \eqref{eq: quandef2} that 
	\begin{align*}
	&\pi(k_i)\pi(e_{i+1})\pi(k_i^{-1})=v_iv_{i+1}^{-1}\psi_{i+1}\psi_{i+2}^{\dagger}v_{i+1}v_i^{-1}=q^{-1}\pi(e_{i+1}), \quad i=1,2,\dots, n-2,\\
	&\pi(k_i)\pi(e_{i})\pi(k_i^{-1})=v_iv_{i+1}^{-1}\psi_{i}\psi_{i+1}^{\dagger}v_{i+1}v_i^{-1}=q^{2}\psi_{i}\psi_{i+1}^{\dagger}=q^{2}\pi(e_{i}),\quad i=1,2,\dots, n-1,\\
	&\pi(k_{n-1})\pi(e_{n})\pi(k_{n-1}^{-1})=\sqrt{-1}(q^{\frac{1}{2}}-q^{-\frac{1}{2}})^{-1}v_{n-1}v_{n}^{-1}\psi_{n}v_nv_{n-1}^{-1}=q^{-1}\pi(e_{n}).	
	\end{align*}
The other remaining relations of $\eqref{eq: quandef2}$ can be proved in the same way. By \lemref{lem-1},  relations (\ref{eq: quandef3})  hold under $\pi$ for all $1\le i,j \le n$.

 Now we turn to show that  $\pi$ preserves quantum  Serre relations (\ref{def-osp2n-1})-(\ref{def-osp2n-3}), and the case of quantum Serre relations for $f_i$ can be proved along the same line.  Relations (\ref{def-osp2n-1}) are obvious  from    (\ref{lem-1-1}).  Using (\ref{def-Vq3}), (\ref{def-Vq4}) and (\ref{def-Vq5}), we obtain 
\[  
  \begin{aligned}
 \pi(e_{i-1}e_i-qe_ie_{i-1})=&\psi_{i-1}\psi_{i}^{\dagger}\psi_{i}\psi_{i+1}^{\dagger}-q \psi_{i}\psi_{i+1}^{\dagger}\psi_{i-1}\psi_{i}^{\dagger}\\
=& (\psi_{i}^{\dagger}\psi_{i}-q \psi_{i}\psi_{i}^{\dagger})\psi_{i-1}\psi_{i+1}^{\dagger}\\
=& v_i^{-1}\psi_{i-1}\psi_{i+1}^{\dagger}.
\end{aligned}   
\]
This  together with (\ref{def-Vq2}) and (\ref{def-Vq3}) lead to
\[  
  \begin{aligned}
& \pi( (e_{i- 1})^2e_i-(q+q^{-1}) e_{i- 1}e_ie_{i-1}+ e_i(e_{i- 1})^2)\\
=& \pi(e_{i-1}) \pi(e_{i-1}e_i-qe_ie_{i-1})-q^{-1} \pi(e_{i-1}e_i-qe_ie_{i-1})\pi(e_{i-1}) \\
=& \psi_{i-1}\psi_i^{\dagger} v_i^{-1}\psi_{i-1}\psi_{i+1}^{\dagger}-q^{-1}v_i^{-1}\psi_{i-1}\psi_{i+1}^{\dagger}\psi_{i-1}\psi_i^{\dagger}\\
=& 0.
\end{aligned}   
\]	
Thus (\ref{def-osp2n-2}) holds for the case $i-1$, while  the case for $i+1$ can be proved similarly.  Finally, relation (\ref{def-osp2n-3}) follows immediately from $\psi_n^2=0$.
\end{proof}

Recall that the Fock space $V_q(n)$ is an irreducible representation of $\Cl_q(n)$. Therefore,  we  get an action of  $\U_{q}(\osp(1|2n),\Theta_1)$ on $V_q(n)$  through the above homomorphism $\pi$. This is called  \emph{ spinor representation} of $\U_{q}(\osp(1|2n),\Theta_1)$.

\begin{prop}\label{PropSpinorSo}
	The spinor representation  $V_q(n)$ of  $\U_{q}(\osp(1|2n),\Theta_1)$  is irreducible with the  highest weight vector $\vac$ of weight  $(1,\dots,1,\sqrt{-1}q^{\frac{1}{2}})$.  Furthermore, the dimension of  $V_q(n)$ is  $2^n$.
\end{prop}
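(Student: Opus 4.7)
The proposition has three claims: $\vac$ is a highest weight vector of weight $\lambda = (1,\dots,1,\sqrt{-1}q^{\frac{1}{2}})$; the module $V_q(n)$ is irreducible; and $\dim V_q(n) = 2^n$. The dimension is immediate from \propref{prop:irrcl}. For the highest-weight claim I would simply verify $e_i\vac = 0$ and the $k_i$-eigenvalues directly via \thmref{thm:alghomo} and the Fock-space action of \propref{prop:irrcl}: for $i<n$, $\pi(e_i)\vac = \psi_i\psi_{i+1}^{\dagger}\vac = \psi_i|\be_{i+1}\rangle = 0$ since the $i$-th entry of $\be_{i+1}$ vanishes; for $i=n$, $\pi(e_n)\vac$ is a scalar multiple of $\psi_n\vac = 0$; and $v_j\vac = \vac$ for all $j$ yields the correct $k_i$-eigenvalues.

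The organising observation for irreducibility is that distinct $\br$ give distinct weights. Indeed, $k_i\vacr = q^{r_{i+1}-r_i}\vacr$ for $i<n$ and $k_n\vacr = \sqrt{-1}q^{\frac{1}{2}-r_n}\vacr$, so the $k_n$-eigenvalue pins down $r_n$, and then $k_{n-1},\dots,k_1$ inductively pin down $r_{n-1},\dots,r_1$. Consequently every weight space of $V_q(n)$ is one-dimensional, and any nonzero $\Uq$-submodule $W$, being stable under the Cartan subalgebra and hence a direct sum of weight spaces, must contain at least one basis vector $\vacr$.

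I would then argue in two directions. First, from any $\vacr \neq \vac$, an explicit sequence of $e_i$'s produces a nonzero scalar multiple of $\vac$: if $r_n = 1$, apply $e_n$ to drop the last coordinate; if $r_n = 0$ and $\br\neq 0$, let $j$ be the largest index with $r_j=1$, so $r_{j+1}=\cdots=r_n=0$, and compute $e_j\vacr = \psi_j\psi_{j+1}^{\dagger}\vacr = \pm|\br-\be_j+\be_{j+1}\rangle$, moving the rightmost $1$ one step to the right. Iterating lands at a nonzero multiple of $\vac$, so $\vac\in W$. Second, the $f_i$-word obtained by reversing this reduction (replacing each $e_i$ by $f_i$ and taking them in the opposite order) sends $\vac$ back to a nonzero scalar multiple of $\vacr$: a direct Fock-space check shows that $f_i\vacs\neq 0$ exactly when $s_i=0$ and $s_{i+1}=1$ (for $i<n$) or $s_n=0$ (for $i=n$), and these are precisely the conditions automatically satisfied at every step of the reversed path. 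Hence $\vacr\in W$ for every $\br$, so $W = V_q(n)$ and $V_q(n)$ is irreducible.

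The step I expect to require the most care is this matching of nonvanishing conditions between the forward $e$-reduction and the backward $f$-reconstruction; the naive worry is that some $f_i$ in the reversed sequence might send an intermediate vector to zero, even though the target weight space is nonzero. The Fock-space sign rules $\psi_i\vacr = (-1)^{r_1+\cdots+r_{i-1}}|\br-\be_i\rangle$ and $\psi_i^{\dagger}\vacr = (-1)^{r_1+\cdots+r_{i-1}}|\br+\be_i\rangle$ show that the support condition on $f_i$ applied to the image of $e_i\vacr$ is dual to the support condition on $e_i$ applied to $\vacr$, so once this symmetry is recorded the reversal is well-defined and the irreducibility proof closes.
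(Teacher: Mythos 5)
Your proposal is correct, and it reaches the conclusion by a route that differs from the paper's in two respects. The paper does not invoke weight spaces at all: it takes an arbitrary nonzero element $f=\sum_{\br}a_{\br}\vacr$ of a submodule, orders its terms lexicographically, and inducts on the degree of the leading term $a_{\bs}\vacs$, applying the word $e_ne_{n-1}\cdots e_j$ (with $j$ the largest index where $s_j=1$) and checking that the image of the leading term stays leading; this strips $\psi_j^\dagger$'s one at a time until $\vac$ is reached. You instead observe that the $2^n$ basis vectors carry pairwise distinct $(k_1,\dots,k_n)$-eigenvalue tuples, so every submodule already contains some $\vacr$, after which you only need to move between basis vectors --- your $e$-word is essentially the same $e_n\cdots e_j$ as the paper's, just applied to a single monomial, which spares you the leading-term bookkeeping. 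The second difference is a genuine improvement: the paper stops at ``every nonzero submodule contains $\vac$'' and asserts that irreducibility follows, leaving the cyclicity of $\vac$ implicit, whereas you supply it explicitly by reversing the $e$-word into an $f$-word and verifying that the nonvanishing conditions $s_i=0,\ s_{i+1}=1$ (resp.\ $s_n=0$) hold at every intermediate state of the reversed path. Both halves of your argument check out against the Fock-space formulas of \propref{prop:irrcl} and the assignments of \thmref{thm:alghomo}, so the proof closes as written.
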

\begin{proof}
	It remains to prove that $V_q(n) $ is irreducible as  $\U_{q}(\osp(1|2n),\Theta_1)$-module. Assume that $U\subset V_q(n)$ is a nonzero $\U_{q}(\osp(1|2n),\Theta_1)$ submodule. Let $f=\sum_{\br}a_{\br}\vacr$ be a nonzero element in $U$ such that all terms in $f$ are ordered lexicographically and the nonzero leading term $a_{\bs} \vacs$ is the maximal one. Setting $\deg f=\deg \vacs=\sum_{i=1}^{n}s_i$ and using induction on $\deg f$, we aim to show that there exists $N\in\Z_{+}$ and nonzero $c_q\in \C(q)$ such that $e_{i_1}e_{i_2}\dots e_{i_{N}}f=c_q\vac$ ($1\leq i_1,\dots,i_N\leq n$), and hence the irreducibility follows.
	
	There is nothing to prove when $\deg f=0$  (in this case, $N=0$), which means that $f$ is a scalar multiple of $\vac$. Now we turn to the general case that $\deg f=k\in \Z_{+}$ with the assumption that   it is true for  $\deg f=k-1$.   In the leading term
	$ a_{\bs}\vacs=a_{\bs} (\psi_1^{\dagger})^{s_1}(\psi_2^{\dagger})^{s_2}\dots(\psi_n^{\dagger})^{s_n}\vac,$ we have $s_{i}\in\{0,1\}$ for all $1\leq i\leq n$. Let $j$ ($1\leq j\leq n$) be the maximal index such that $s_j=1$, then we obtain
	\[  e_ne_{n-1}\dots e_{j} \vacs= \pm \sqrt{-1}(q^{\frac{1}{2}}-q^{-\frac{1}{2}})^{-1}|\bs-\be_{j}\rangle, \]
    which is still maximal among all terms of $e_ne_{n-1}\dots e_{j}f$ with respect to the lexicographical ordering. This reduces to the case $\deg(e_ne_{n-1}\dots e_{j}f)=\deg (e_ne_{n-1}\dots e_{j}\vacs)=k-1$, which is true by assumption.	
\end{proof}


\begin{exam}\label{exosp12}
	{\rm 
	The quantum supergroup 	$\Uq(\osp(1|2),\Theta_1)$ is generated by $e,f,k^{\pm 1}$ subject to relations 
	\[ kek^{-1}=qe,\quad kfk^{-1}=q^{-1}f,\quad ef+fe=\frac{k-k^{-1}}{q-q^{-1}}.   \]
	The spinor representation $V_q(1)$ is spanned by basis  $\{\vac,|1\rangle \}$ with the action given by
	\begin{equation*}
	\begin{aligned}
	&k\vac=\sqrt{-1}q^{\frac{1}{2}}\vac, \quad k|1 \rangle =-\sqrt{-1}q^{-\frac{1}{2}}|1 \rangle,\\
	& e\vac=0,\quad e|1\rangle=\sqrt{-1}(q^{\frac{1}{2}}-q^{-\frac{1}{2}})^{-1}\vac,\\
	&f\vac=|1\rangle, \quad f |1\rangle=0.
	\end{aligned}
    \end{equation*}
    It is clear that $\vac$ is the highest weight vector of weight $\sqrt{-1}q^{\frac{1}{2}}$. As a byproduct,  $V_q(1)$ is not a deformation of any finite dimensional representation of $\osp(1|2)$, since the dimension of irreducible representation of $\osp(1|2)$ must be odd, while the dimension of $V_q(1)$ is even. 
	 }	
\end{exam}

\section{Spinor representations of $\Uq(\osp(2m+1|2n))$}\label{secospgen}

In this section,  we  introduce the deformed Clifford superalgebra $\Cl_q(m,n)$ furnished with infinite dimensional irreducible Fock space, which is shown to be spinor representation of $\Uq(\osp(2m+1|2n),\Theta_1)$ with $\Theta_1=\{m,m+n\}$.  Then we generalise our construction to the case with arbitrary set $\Theta$ of odd simple roots and obtain the spinor  representation of $\Uq(\osp(2m+1|2n),\Theta)$.

\subsection{Simple case with $\Theta_1$} \label{secsimcase}
Let us start by introducing a set of  bosonic  generators $\phi_i, \phi_i^{\dagger},u_i^{\pm 1}$ ($i=1,2,\dots, m$), which are subject to the relations
\begin{align}
 u_iu_j=u_ju_i,&\quad  u_iu_i^{-1}=u_i^{-1}u_i=1,\label{def-Vq1'}\\
 u_i\phi_ju_i^{-1}=q^{-\delta_{ij}}&\phi_j,\quad u_i\phi_j^{\dagger}u_i^{-1}=q^{\delta_{ij}}\phi_j^{\dagger},\label{def-Vq2'}\\
 \phi_i\phi_j-\phi_j\phi_i&=\phi_i^{\dagger}\phi_j^{\dagger}-\phi_j^{\dagger}\phi_i^{\dagger}=0,  \label{def-Vq3'}\\
 \phi_i\phi_j^{\dagger}-&\phi_j^{\dagger}\phi_i=0, \quad i\neq j, \label{def-Vq4'}\\
 \phi_i\phi_i^{\dagger}-q\phi_i^{\dagger}\phi_i=&u_i^{-1},\quad \phi_i\phi_i^{\dagger}-q^{-1}\phi_i^{\dagger}\phi_i=u_i.\label{def-Vq5'}
\end{align}
Note that the relations  \eqref{def-Vq5'} are equivalent to the following
\begin{equation} \label{eq:om1'} \tag{\ref{def-Vq5'}$'$}
 \phi_i\phi_i^{\dagger}=\frac{qu_i-(qu_i)^{-1}}{q-q^{-1}},\quad \phi_i^{\dagger}\phi_i=\frac{u_i-u_i^{-1}}{q-q^{-1}}.
\end{equation}

\begin{defn}\label{defnClsuper}
	The \emph{deformed Clifford superalgebra $\Clq(m,n)$} is  the associative superalgebra over $\C(q^{\frac{1}{2}})$ generated by 
	bosonic  generators  $\phi_i, \phi_i^{\dagger},u_i^{\pm 1}$ ($i=1,2,\dots, m$) and fermionic  generators $\psi_j, \psi_j^{\dagger},v_j^{\pm 1}$ ($j=1,2,\dots, n$) with the $\Z_2$-grading
    \begin{equation}\label{eqgrading}
      \begin{aligned}
     	& [u_i^{\pm 1}]= [v_j^{\pm 1}]=  [\phi_i]=  [\phi_i^{\dagger}]=\bar{0}, \\
     	&[\psi_j]=[\psi_j^{\dagger}]=\bar{1},
      \end{aligned}	    
    \end{equation}
 subject to the relations (\ref{def-Vq1})-(\ref{def-Vq5}), (\ref{def-Vq1'})-(\ref{def-Vq5'}), and the condition that the bosonic and fermionic generators commute.
\end{defn}

We are now in a position to construct Fock space $V_q(m,n)$ of $\Clq(m,n)$. Let $\vac\in V_q(m,n)$  be the vacuum vector, which satisfies 
\begin{equation*}
  \phi_i\vac=\psi_j\vac=0,\quad u_i\vac=v_j\vac=\vac,\quad  i=1,2,\dots,m,  j=1,2,\dots,n.
\end{equation*}
Denote by 
\begin{equation*}
   \vact:=(\phi_1^{\dagger})^{t_1}(\phi_2^{\dagger})^{t_2}\dots(\phi_m^{\dagger})^{t_m}(\psi_1^{\dagger})^{\bar{t}_1}(\psi_2^{\dagger})^{\bar{t}_2}\dots(\psi_n^{\dagger})^{\bar{t}_n}\vac
\end{equation*}
the basis element for $V_q(m,n)$,  where $\bt=(t_1,t_2,\dots, t_m,\bar{t}_1,\bar{t}_2,\dots,\bar{t}_n)$ with $t_i\in\Z_+ $ and $ \bar{t}_j\in\{0,1\}$ for all  $1\leq i\leq m$ and   $1\leq j\leq n$.
Let $[t_i]_q=(q^{t_i}-q^{-t_i})(q-q^{-1})^{-1}$ be the $q$-integer and  $\be_i$ be the $(m+n)$-tuple with 1 in the $i$-th position and 0 elsewhere, the action of $\Clq(m,n)$ on  $V_q(m,n)$ is determined by
\begin{equation*}
   \begin{aligned}
   & u_i\vact=q^{t_i}\vact,\quad 
    \phi_i\vact=[t_i]_q|\bt-\be_i\rangle,\quad 
    \phi_i^{\dagger}\vact=|\bt+\be_i\rangle,  \quad i=1,2,\dots, m,\\
     &v_j\vact=q^{-\bar{t}_j}\vact,\quad \psi_j\vact=(-1)^{\bar{t}_1+\bar{t}_2+\cdots+\bar{t}_{j-1}}|\bt-\be_{m+j}\rangle,\\ &\psi_j^{\dagger}\vact=(-1)^{\bar{t}_1+\bar{t}_2+\cdots+\bar{t}_{j-1}}|\bt+\be_{m+j}\rangle,\quad j=1,2,\dots, n.
   \end{aligned}	
\end{equation*}
Note that we set $|\bt\pm\be_{m+j}\rangle=0$  whenever $\bar{t}_j\pm 1\notin \{0,1\}$.
By using similar method as in \propref{prop:irrcl}, we conclude that $V_q(m,n)$ is an infinite dimensional irreducible representation of $\Clq(m,n)$.

\begin{thm}\label{thm-osp}
The $\C(q^{\frac{1}{2}})$-linear map $\pi$ from $ \Uq(\osp(2m+1|2n),\Theta_1)$ to  $\Clq(m,n)$, defined by 
\[\begin{aligned}
	& e_i \mapsto \phi_{i}\phi_{i+1}^{\dagger},\quad  f_i \mapsto \phi_{i+1}\phi_i^{\dagger}\quad k_i\mapsto -u_i u_{i+1}^{-1},\quad  i=1, 2,\dots, m-1, \\
	& e_m \mapsto \phi_m\psi_1^{\dagger},\quad f_m\mapsto \phi_m^{\dagger}\psi_1,\quad k_m\mapsto u_mv_1^{-1},\\ 
          & e_{m+j} \mapsto \psi_{j}\psi_{j+1}^{\dagger},\quad  f_{m+j} \mapsto \psi_{j+1}\psi_j^{\dagger}\quad k_{m+j}\mapsto v_j v_{j+1}^{-1},\quad  j=1,2, \dots, n-1, \\
	& e_{m+n} \mapsto \sqrt{-1}(q^{\frac{1}{2}}-q^{-\frac{1}{2}})^{-1}\psi_n,\quad f_{m+n}\mapsto \psi_n^{\dagger},\quad k_{m+n}\mapsto \sqrt{-1} q^{\frac{1}{2}}v_n
\end{aligned}
\]
is an associative superalgebra homomorphism.
\end{thm}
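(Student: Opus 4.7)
The plan is to verify that every defining relation \eqref{eq: quandef1}--\eqref{eq: quandef6} of $\Uq(\osp(2m+1|2n),\Theta_1)$ is preserved by $\pi$, and that the $\Z_2$-grading is respected (the latter being immediate, since the images of the odd generators $e_m, f_m, e_{m+n}, f_{m+n}$ each contain exactly one fermionic letter). The verification splits along the Dynkin diagram \eqref{eqDynDiag1}: the sub-diagram on nodes $1,\dots,m-1$ is that of $\Uq(\gl_m)$, and the relations among those generators are the standard quantum boson realization \cite{D}; the sub-diagram on nodes $m+1,\dots,m+n$ is that of $\Uq(\osp(1|2n),\Theta_1)$, and all relations confined to those indices follow from \thmref{thm:alghomo} applied to the fermionic generators. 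All genuinely new content thus lies in relations involving the odd isotropic node $m$ and in cross-relations between the boson and fermion sectors.

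First I would dispose of \eqref{eq: quandef1}--\eqref{eq: quandef2}. Commutation of the $\pi(k_i)$'s is immediate from \eqref{def-Vq1}, \eqref{def-Vq1'}, and the commutativity of bosonic with fermionic generators; the sign in $\pi(k_i)=-u_iu_{i+1}^{-1}$ for $i\le m-1$ always pairs with its inverse and cancels. The weight relations reduce to direct applications of \eqref{def-Vq2} and \eqref{def-Vq2'}; the essential new checks at the boundary (for instance $\pi(k_{m-1})\pi(e_m)\pi(k_{m-1})^{-1}=q\pi(e_m)$ and $\pi(k_m)\pi(e_{m\pm 1})\pi(k_m)^{-1}=q^{\mp 1}\pi(e_{m\pm 1})$) produce exactly the exponents $q_i^{a_{ij}}$ dictated by the Cartan matrix, thanks to the choices $d_m=-1$, $d_{m+1}=1$ combining correctly with the commutation rules on $u_m$ and $v_1$.

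For the bilinear relation \eqref{eq: quandef3}, the cases $i,j\le m-1$ and $i,j\ge m+1$ are covered by the two known realizations. Off-diagonal cases crossing $m$ are routinely zero: each side of the supercommutator contains at most one fermionic letter, which commutes through the bosons, and the two terms either become identical or collapse thanks to $(\psi_1^{\dagger})^2=0$. The essential diagonal case is $i=j=m$, where using $\psi_1^{\dagger}\phi_m^{\dagger}=\phi_m^{\dagger}\psi_1^{\dagger}$ together with \eqref{eq:om1} and \eqref{eq:om1'} one finds
\begin{equation*}
\pi(e_mf_m+f_me_m)=\phi_m\phi_m^{\dagger}\,\psi_1^{\dagger}\psi_1+\phi_m^{\dagger}\phi_m\,\psi_1\psi_1^{\dagger}=\frac{u_mv_1^{-1}-u_m^{-1}v_1}{q-q^{-1}},
\end{equation*}
matching $\pi\bigl((k_m-k_m^{-1})/(q-q^{-1})\bigr)$. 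Relation \eqref{eq: quandef4} is immediate from $\phi_m^2(\psi_1^{\dagger})^2=0$ and its $f$-analogue.

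The quantum Serre relations \eqref{eq: quandef5} not involving $m$ are inherited from the two known realizations; those crossing $m$ involve exactly one fermionic letter which commutes through the bosons, reducing each cubic identity to a standard $q$-Serre identity in the remaining letters. The main obstacle I anticipate is the higher-order Serre relation \eqref{eq: quandef6}, since each $\Ad$ operator carries a sign depending on the $\Z_2$-parity of its argument and these parities change at every step. My strategy is to iterate inductively: the first $\Ad_{e_m}$ collapses $e_me_{m+1}-q^{-1}e_{m+1}e_m$ to a single monomial $q^{-1}v_1^{-1}\phi_m\psi_2^{\dagger}$ via \eqref{def-Vq5}; the subsequent $\Ad_{e_{m-1}}$ yields $-v_1^{-1}u_m\phi_{m-1}\psi_2^{\dagger}$ after applying the identity $q^{-1}\phi_m^{\dagger}\phi_m-\phi_m\phi_m^{\dagger}=-u_m$ derived from \eqref{eq:om1'}; and the final $\Ad_{e_m}$ annihilates this by a two-term cancellation arising from $\phi_mu_m=qu_m\phi_m$ combined with $\psi_2^{\dagger}\psi_1^{\dagger}=-\psi_1^{\dagger}\psi_2^{\dagger}$. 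The $f$-side identity follows by the transposed calculation.
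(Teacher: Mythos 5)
Your proposal is correct and takes essentially the same route as the paper: reduce everything to the established boson ($\gl_m$) and fermion ($\osp(1|2n)$, \thmref{thm:alghomo}) realisations plus a direct check of the relations involving node $m$, with the higher-order Serre relation \eqref{eq: quandef6} handled via the same key collapse $\pi(e_me_{m+1}-q^{-1}e_{m+1}e_m)=q^{-1}\phi_m\psi_2^{\dagger}v_1^{-1}$. The only (cosmetic) difference is the finish: you collapse again after $\Ad_{e_{m-1}}$ using \eqref{def-Vq5'} and end with a two-term cancellation (which also needs $v_1^{-1}\psi_1^{\dagger}=q\psi_1^{\dagger}v_1^{-1}$, not only the two relations you name), whereas the paper keeps all four terms and reduces them to the cubic identity $\phi_m^2\phi_m^{\dagger}+\phi_m^{\dagger}\phi_m^2-(q+q^{-1})\phi_m\phi_m^{\dagger}\phi_m=0$.
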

\begin{proof}
 We need to show that  the assignments of generators preserve the relations (\ref{eq: quandef1})-(\ref{eq: quandef6}). The  proof of   (\ref{eq: quandef1})-(\ref{eq: quandef5}) is similar to that of  Theorem \ref{thm:alghomo}, so we just prove that $\pi$ preserves the first relation of (\ref{eq: quandef6}), which can be written explicitly as
\begin{equation}  \label{ospmn-1}
  \begin{aligned}
& e_me_{m-1}(e_me_{m+1}-q^{-1}e_{m+1}e_m)-qe_m(e_me_{m+1}-q^{-1}e_{m+1}e_m)e_{m-1}\\
& +e_{m-1}(e_me_{m+1}-q^{-1}e_{m+1}e_m)e_m-q(e_me_{m+1}-q^{-1}e_{m+1}e_m)e_{m-1}e_m=0.
 \end{aligned}
\end{equation}
By using Definition \ref{defnClsuper}, we get 
 \[
\pi(e_me_{m+1}-q^{-1}e_{m+1}e_m)=   \phi_m\psi_{2}^{\dagger}(\psi_1^{\dagger}\psi_{1}+q^{-1}\psi_{1}\psi_1^{\dagger})\\
=q^{-1} \phi_m\psi_{2}^{\dagger}v_1^{-1}.
\]
Hence the image of the left hand side of  (\ref{ospmn-1}) equals
\[
\begin{aligned}
& q^{-1} \phi_m\psi_1^{\dagger} \phi_{m-1}\phi_{m}^{\dagger}\phi_m\psi_{2}^{\dagger}v_1^{-1}-\phi_m\psi_1^{\dagger}\phi_m\psi_{2}^{\dagger}v_1^{-1}\phi_{m-1}\phi_{m}^{\dagger}\\
&+ q^{-1}\phi_{m-1}\phi_{m}^{\dagger}\phi_m\psi_{2}^{\dagger}v_1^{-1} \phi_m\psi_1^{\dagger}- \phi_m\psi_{2}^{\dagger}v_1^{-1} \phi_{m-1}\phi_{m}^{\dagger}                        \phi_m\psi_1^{\dagger}\\
=& ( \phi_m^2\phi_{m}^{\dagger} + \phi_{m}^{\dagger}\phi_m^2-(q+q^{-1})\phi_m\phi_{m}^{\dagger}\phi_m)\phi_{m-1}\psi_{2}^{\dagger}\psi_{1}^{\dagger}v_1^{-1},\\
\end{aligned}
\]
which is zero since
 \begin{equation}\label{e-relation}
  \begin{aligned}
&\phi_m^2\phi_{m}^{\dagger} + \phi_{m}^{\dagger}\phi_m^2-(q+q^{-1})\phi_m\phi_{m}^{\dagger}\phi_m\\
=&\phi_m u_m^{-1}+q u_m^{-1}\phi_m+(1+q^2) \phi_{m}^{\dagger}\phi_m^2-(q+q^{-1})( u_m^{-1}\phi_m+q\phi_{m}^{\dagger}\phi_m^2  )\\
=&0.
\end{aligned}
\end{equation}
\end{proof}

  Note that $V_q(m,n)$ is $\Z_{+}$-graded with the degree assignments $\deg \phi^{\dagger}_i=\deg \psi^{\dagger}_{j}=1$ for all $1\leq i\leq m$ and $1\leq j\leq n$.  Let $\bs,\bt \in  \Z_{+}^{\times m}\times \{ 0,1\}^{\times n}$ be the $(m+n)$-tuples,  we introduce the lexicographical-ordering  $ \vacs \succ \vact $ if there exists integer $a$ $(1\leq a\leq m+n)$ such that the $a$-th entry of $\bs$ is strictly bigger than the $a$-th entry of $\bt$, while  the first $a-1$ entries of $\bs$ and $\br$ are identical. For any  $f=\sum_{\bt}a_{\bt}\vact$ with the nonzero leading term $a_{\bs}\vacs$ such that $\vacs$ is maximal in the lexicographical ordering, we define its degree by setting
\[\deg f=\deg \vacs=\sum_{i=1}^{m}s_i+\sum_{j=1}^{n}\bar{s}_{j}. \]
Then we have the spinor representation $V_q(m,n)$ of $\U_{q}(\osp(2m+1|2n),\Theta_1)$ through the above homomorphism $\pi$.

\begin{prop}\label{PropSpinor-osp}  
	The spinor representation  $V_q(m,n)$ of  $\U_{q}(\osp(2m+1|2n),\Theta_1)$  is irreducible with the highest weight vector $\vac$ of weight  $(-1,\dots, -1,1,\dots,1,\sqrt{-1}q^{\frac{1}{2}})$, where there are $m-1$ copies of $-1$ and $n$ copies of $1$.
\end{prop}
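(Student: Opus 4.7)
The strategy parallels that of \propref{PropSpinorSo}, with the lexicographic ordering extended naturally from $\{0,1\}^n$ to $\Z_+^m\times\{0,1\}^n$. I would proceed in three steps: compute the weight of $\vac$, verify $e_i\vac = 0$ for all $i$, and show irreducibility by induction on the degree of a lex-maximal term of a submodule element. The first two are immediate from \thmref{thm-osp} together with the defining Fock-space actions $u_i\vac = v_j\vac = \vac$ and $\phi_i\vac = \psi_j\vac = 0$: one finds $k_i\vac = -\vac$ for $1\le i\le m-1$, $k_m\vac = \vac$, $k_{m+j}\vac = \vac$ for $1\le j\le n-1$, and $k_{m+n}\vac = \sqrt{-1}q^{\frac{1}{2}}\vac$, matching the claimed weight, while $e_i\vac = 0$ follows because every $\pi(e_i)$ contains an annihilation factor that commutes past its creation factors (using that bosons and fermions commute in $\Clq(m,n)$).

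For irreducibility, let $U\subseteq V_q(m,n)$ be a nonzero submodule and pick $0\ne f = \sum_\bt a_\bt\vact \in U$ with lex-maximal nonzero term $a_\bs\vacs$. I would induct on $\deg f := \sum_{i=1}^{m}s_i + \sum_{j=1}^{n}\bar s_j$ to show $\vac \in U$. For the inductive step, I would construct a monomial $E$ in the $e_i$'s whose total positional effect is the translation $\bt \mapsto \bt - \be_p$ for a distinguished index $p$ read off from $\bs$. If some $\bar s_j = 1$, set $j^* = \max\{j : \bar s_j = 1\}$ and take $E = e_{m+n}e_{m+n-1}\cdots e_{m+j^*}$, giving $p = m+j^*$; this chain shifts the fermion at position $m+j^*$ rightward step by step to position $m+n$ and then destroys it via $\pi(e_{m+n})$. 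Otherwise every $\bar s_j = 0$; letting $i^* = \max\{i : s_i > 0\}$, take $E = e_{m+n}e_{m+n-1}\cdots e_m e_{m-1}\cdots e_{i^*}$, giving $p = i^*$; here $E$ first lifts one boson from position $i^*$ up to position $m$, converts it into a fermion at position $m+1$ via $\pi(e_m) = \phi_m\psi_1^{\dagger}$, then propagates it to $m+n$ and destroys it. A direct evaluation yields $E\vacs = c_\bs|\bs - \be_p\rangle$ with $c_\bs$ a nonzero product of $q$-integers and $\pm\sqrt{-1}(q^{\frac{1}{2}}-q^{-\frac{1}{2}})^{-1}$; every intermediate step succeeds because every target slot of a creation operator in $E$ is empty in $\bs$, by the choice of $j^*$ or $i^*$.

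The main technical point, which is the hard part of the argument, is that the lex-leading term of $Ef$ remains carried by $\bs$. The key observation is that each generator in $E$ maps every basis vector $\vact$ to either $0$ or a scalar multiple of a single basis vector, and the cumulative positional offset $\Delta = -\be_p$ produced by $E$ is independent of $\bt$. Since lex order on $\Z^{m+n}$-tuples is preserved under translation, the nonzero images $E\vact$ retain their relative lex order; combined with $E\vacs \ne 0$, this forces the lex-leading term of $Ef$ to be $a_\bs c_\bs|\bs - \be_p\rangle$, a nonzero element of degree $\deg f - 1$. Induction yields $\vac \in U$. As in the proof of \propref{PropSpinorSo}, irreducibility then follows once $V_q(m,n) = \Uq(\osp(2m+1|2n),\Theta_1)\cdot\vac$ is verified by reversing the construction: starting from $\vac$, repeated applications of $f_{m+n} = \psi_n^{\dagger}$, the $f_{m+j}$ for $j<n$, $f_m = \phi_m^{\dagger}\psi_1$, and the $f_i$ for $i<m$ build every basis vector $\vact$.
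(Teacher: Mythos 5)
Your proof is correct and follows essentially the same strategy as the paper's: induction on the degree of the lex-leading term of an element of a nonzero submodule, applying a chain of $e_i$'s to reduce to the vacuum. The differences are minor --- the paper first consolidates all bosons at position $m$ via powers of $e_1,\dots,e_{m-1}$ and then invokes the $\osp(1|2n)$ argument, whereas you remove one quantum per step with a single telescoping chain --- and your explicit justification that the leading term survives (each generator acts monomially with a $\bt$-independent offset, and lex order is translation-invariant) together with the cyclicity of $\vac$ fills in points the paper leaves implicit.
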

\begin{proof}
	It can be proved by using the same trick as in \propref{PropSpinorSo}. Taking a nonzero element $f=\sum_{\bt}a_{\bt}\vact\in V_q(m,n)$, we  use induction on $\deg f$ to show the assertion that there exist $N\in\Z_+$ and nonzero $c_{q}\in \C(q)$ such that 
	$e_{i_1}e_{i_2}\dots e_{i_{N}}f=c_q \vac$, 
	where $1\leq i_1,i_2,\dots, i_N\leq m+n$.
	
    There is nothing to prove when $\deg f=0$, and we turn to the general case that $\deg f=k\in \Z_{+}$ with the assumption that   it is true for  $\deg f\leq k-1$.  Suppose $a_{\bs}\vacs$ is the leading term of $f$, we have
	\[ 	
	\deg (e_{m-1}^{s_1+s_2+\dots +s_{m-1}}\dots e_2^{s_1+s_2}e_1^{s_1}f)=\deg (e_{m-1}^{s_1+s_2+\dots +s_{m-1}}\dots e_2^{s_1+s_2}e_1^{s_1}\vacs)=\deg (c_q^{\prime} |\bs^{\prime}\rangle ),    \]
	where $0\neq c_q^{\prime}\in \C(q)$,  $\bs^{\prime}=(0,\dots,0,\sum_{i=1}^{m}s_i,\bar{s}_1,\dots,\bar{s}_n)$ and 
	\[ |\bs^{\prime}\rangle=(\phi^{\dagger}_m)^{ \sum_{i=1}^{m}s_i }(\psi_1^{\dagger})^{\bar{s}_1}(\psi_2^{\dagger})^{\bar{s}_2}\dots(\psi_n^{\dagger})^{\bar{s}_n}\vac.   \]
    Note that $\bar{s}_i \in \{0,1\}$ for all $1\leq i\leq n$, we have two cases now. If $\sum_{j=1}^{n}\bar{s}_j\neq 0$, then by the assertion proved in  \propref{PropSpinorSo} there exist $1\leq j_1,j_2,\dots, j_{N'}\leq n$ ($N^{\prime}\geq 1$) and $0\neq c_q^{\prime\prime}\in \C(q)$ such that 
	\[
	\begin{aligned}
	& \deg ( e_{m+j_1}e_{m+j_2}\dots e_{m+j_{N^{\prime}}} e_{m-1}^{s_1+s_2+\dots +s_{m-1}}\dots e_2^{s_1+s_2}e_1^{s_1}f)\\
	=& \deg (e_{m+j_1}e_{m+j_2}\dots e_{m+j_{N^{\prime}}} |\bs^{\prime}\rangle) =\deg (c_q^{\prime\prime} (\phi^{\dagger}_m)^{ \sum_{i=1}^{m}s_i }\vac )<k,
	\end{aligned}
	\]
	whence our assertion follows by assumption. Otherwise, we have $\bar{s}_1=\bar{s}_2=\dots=\bar{s}_n=0$. In this case $|\bs^{\prime}\rangle =(\phi^{\dagger}_m)^{ \sum_{i=1}^{m}s_i }\vac$, and 
	\[ \deg (e_m |\bs^{\prime}\rangle )=\deg ( (\phi^{\dagger}_m)^{ \sum_{i=1}^{m}s_i-1}\psi_1^{\dagger} \vac)=k,   \]
	which reduces to the first case as $(\phi^{\dagger}_m)^{ \sum_{i=1}^{m}s_i-1}\psi_1^{\dagger} \vac$ is the form considered therein.
\end{proof}


\subsection{General case with arbitrary $\Theta$}\label{secgen}
\subsubsection{General definition of $\Uq(\osp(2m+1|2n))$}
We begin with the root data of Lie superalgebra $\osp(2m+1|2n)$.  Let $\CE_{m|n}$ be the  $(m+n)$-dimensional vector space over $\R$ with a basis consisting  of elements $\varepsilon_i$ ($i=1, 2, \dots, m$) and $\delta_\mu$ ($\mu=1, 2, \dots, n$).  We endow the basis elements with a total order $\lhd$, which is called \emph{admissible order} if $\epsilon_i\lhd \epsilon_{i+1}$ and $\delta_{\mu}\lhd \delta_{\mu+1}$ for all $i,\mu$. Fix an admissible order and let $\CE_1\lhd \CE_2\lhd \cdots\lhd \CE_{m+n}$ be the ordered basis of $\CE_{m|n}$. We  define a symmetric non-degenerate bilinear form on $\CE_{m|n}$ by
\begin{equation}\label{eqbilinear}
(\epsilon_i,\epsilon_j)=-\delta_{ij},\quad (\delta_\mu,\delta_\nu)=\delta_{\mu\nu},\quad (\epsilon_i,\delta_\mu)=(\delta_\mu,\epsilon_i)=0.
\end{equation}

The set of positive roots of $\osp(2m+1|2n)$ can be  realised as a subset of $\CE_{m|n}$ with an admissible order $\lhd$. Explicitly, each choice of a Borel subalgebra of $\osp(2m+1|2n)$ corresponds to a choice of positive roots, and hence a fundamental system $\Pi^{\lhd}=\{\alpha_1,\alpha_2,\dots, \alpha_{m+n}\}$ of simple roots, where 
\[ \alpha_i=\CE_i-\CE_{i+1}, \quad  i=1,2, \dots, n-1,\quad \alpha_{m+n}=\CE_{m+n}.  \]
The Weyl group conjugacy classes of  Borel subalgebras correspond bijectively to the admissible ordered bases of $\CE_{m|n}$.
The Dynkin diagram associated to  $\Pi^{\lhd}$ is of Type 1 or Type 2 shown in Section 1, depending on whether $\alpha_{m+n}=\CE_{m+n}=\delta_n$ or $\alpha_{m+n}=\CE_{m+n}=\epsilon_m$.

Denote by $\Theta\subset\{1, 2, \dots, m+n\}$  the labelling set of the odd simple roots in $\Pi^{\lhd}$, that is, $\alpha_s\in \Pi^{\lhd}$ is an odd simple root for any $s\in\Theta$.  Clearly, the simple root system $\Pi^{\lhd}$ uniquely determines the set of odd roots, and vice versa.
We define the Cartan matrix of $\osp(2m+1|2n)$ associated to the simple root system $\Pi^{\lhd}$ (or equivalently, $\Theta$)  by
\begin{equation}\label{eqCartan}
  A=(a_{ij}) \quad\text{with}\quad
  a_{ij}=\begin{cases}\dfrac{2(\alpha_i,\alpha_j)}{(\alpha_i,\alpha_i)},&\mbox{if}~(\alpha_i,\alpha_i)\neq 0,\\-(\alpha_i,\alpha_j),&\mbox{if}~(\alpha_i,\alpha_i)=0.
  \end{cases}
\end{equation}
Note that $(\alpha_i,\alpha_i)=0$ if and only if $\alpha_i$ is an isotropic odd root. 
Let 
\[
d_i=\begin{cases}\dfrac{(\alpha_i,\alpha_i)}{2},&\mbox{if}~(\alpha_i,\alpha_i)\neq 0,\\    -1, &\mbox{if}~(\alpha_i,\alpha_i)=0,
\end{cases}
\]
 we have $d_{i}a_{ij}=d_ja_{ji}$. Let $q_i=q^{d_i}$. To introduce general definition of $\Uq(\osp(2m+1|2n),\Theta)$, we adopt the following notation:
\[\begin{aligned}
&[k]_{q_i}=\frac{{q_i}^k-{q_i}^{-k}}{{q_i}-{q_i}^{-1}},\quad \{k\}_{q_i}=\frac{{q_i}^k-(-{q_i})^{-k}}{{q_i}+{q_i}^{-1}},\quad \mbox{for}\ \  k\in\N, \\
&[0]_{q_i}!=\{0\}_{q_i}!=1, \quad [N]_{q_i}!=\prod_{i=1}^N[i]_{q_i},\quad \{N\}_{q_i}!=\prod_{i=1}^N\{i\}_{q_i}, \mbox{ for}\ \ 1\le N\in\N,\\
&\begin{bmatrix} N\\k\end{bmatrix}_{q_i}=\frac{[N]_{q_i}!}{[N-k]_{q_i}![k]_{q_i}!},\quad \left\{\begin{matrix} n\\k\end{matrix}\right\}_{q_i}=\frac{\{N\}_{q_i}!}{\{N-k\}_{q_i}! \{k\}_{q_i}!},\quad \mbox{for}\ \  k\leq N \in \N.
\end{aligned}
\]

In what follows, we write $\Uq(\osp(2m+1|2n),\Pi^{\lhd}):=\Uq(\osp(2m+1|2n),\Theta)$ to emphasise the significance of the admissible order $\lhd$.  

\begin{defn}\label{defospgen} \cite{XZ,Y}
The quantum supergroup $\Uq(\osp(2m+1|2n),\Pi^{\lhd})$  associated to $\Pi^{\lhd}$ over $\C(q^{\frac{1}{2}})$ is generated by $e_i,f_i,k_i^{\pm 1} (i=1,2,\dots, m+n)$, where $e_s,f_s (s\in\Theta)$ are odd and the rest are even. The defining relations are:
 \begin{enumerate}
 \item [(R1)] relations \eqref{eq: quandef1}, \eqref{eq: quandef2} and \eqref{eq: quandef3};
 \item [(R2)] Serre relations: 
 	\noindent if $i\notin \Theta$, then for all $j\ne i$,
 	\[
 	\begin{aligned}
 	&\sum_{k=0}^{1-a_{i j}}
 	(-1)^k \begin{bmatrix} 1-a_{i j}\\k\end{bmatrix}_{q_i} e_i^{k} e_j
 	e_i^{1-a_{i j}-k}=0,\quad \\
 	&\sum_{k=0}^{1-a_{i j}}
 	(-1)^k \begin{bmatrix} 1-a_{i j}\\k\end{bmatrix}_{q_i} f_i^{k} f_j
 	f_i^{1-a_{i j}-k}=0;
 	\end{aligned}
 	\]
 	\noindent  
 	if $s\in\Theta$ and $a_{s s}=2$, then for all $j\ne s$,
 	\begin{eqnarray*}
 	&&\label{eq:UqB-odd2}
 	\begin{aligned}
 	&\sum_{k=0}^{1-a_{s j}}
 	(-1)^{\frac{1}{2}k(k+1)} \left\{\begin{array}{c}1-a_{s j}\\k\end{array}\right\}_{q_s} e_s^{k} e_j
 	e_s^{1-a_{s j}-k}=0, \\
 	&\sum_{k=0}^{1-a_{s j}}
 	(-1)^{\frac{1}{2}k(k+1)}
 	\left\{\begin{array}{c} 1-a_{s j}\\k\end{array}\right\}_{q_s} f_s^{k} f_j
 	f_s^{1-a_{s j}-k}=0;
 	\end{aligned}
 	\end{eqnarray*}
 	if $a_{s s}=0$, then
 	\begin{eqnarray*}
 	&&\label{eq:UqB-odd0}
 	(e_s)^2=0, \quad (f_s)^2=0;
 	\end{eqnarray*}	
  \item [(R3)] high order Serre relations: if the Dynkin diagram
 	contains sub-diagrams of the following types:
 	{\renewcommand\baselinestretch{1.15}\selectfont
 		\begin{enumerate}
 			\item
 			\begin{picture}(75, 15)(0, 7)
 			\put(10, 7){$\times$}
 			\put(15, 10){\line(1, 0){20}}
 			\put(35, 6){\Large$\otimes$ }
 			\put(47, 10){\line(1, 0){20}}
 			\put(62, 7){$\times$}
 			\put(8, -2){\tiny $s-1$}
 			\put(39, -2){\tiny $s$}
 			\put(60, -2){\tiny $s+1$}
 			\put(70, 7){,}
 			\end{picture}
 			with $a_{s-1,s}=-a_{s,s+1}$, the associated 
 			\medskip
 			\noindent
 			higher order Serre relations are
 			\begin{eqnarray*}\label{eq:UqB-higher order1}
 			\begin{aligned}
 			&e_s e_{s-1; s; s+1}+(-1)^{[e_{s-1}]+[e_{s+1}]}  e_{s-1; s; s+1}e_s=0,\\
 			&f_s f_{s-1; s; s+1} +(-1)^{[f_{s-1}]+[f_{s+1}]} f_{s-1; s; s+1}f_s=0;
 			\end{aligned}
 			\end{eqnarray*}
 			
 			\item
 			\begin{picture}(80, 20)(0, 7)
 			\put(10, 7){$\times$}
 			\put(15, 10){\line(1, 0){20}}
 			\put(35, 6){\Large$\otimes$}
 			\put(47, 11){\line(1, 0){17}}
 			\put(47, 9){\line(1, 0){17}}
 			\put(57, 6.5){$>$}
 			\put(70,10){\circle{10}}
 			\put(8, -2){\tiny $s-1$}
 			\put(39, -2){\tiny $s$}
 			\put(63, -2){\tiny $s+1$}
 			\put(76, 7){,}
 			\end{picture}
 			where $s=m+n-1$ and $\alpha_{m+n}=\varepsilon_m$, and the associated
 			
 			\medskip
 			\noindent  higher order Serre relations are
 			\begin{eqnarray*}\label{eq:UqB-higher order2}
 			\begin{aligned}
 			&e_s e_{s-1; s; s+1}+(-1)^{[e_{s-1}]}  e_{s-1; s; s+1}e_s=0,\\
 			&f_s f_{s-1; s; s+1} +(-1)^{[f_{s-1}]}  f_{s-1; s; s+1}f_s=0;
 			\end{aligned}
 			\end{eqnarray*}
 			
 			\item
 			\begin{picture}(80, 20)(0, 7)
 			\put(10, 6.5){$\times$}
 			\put(15, 10){\line(1, 0){20}}
 			\put(35, 6){\Large$\otimes$ }
 			\put(47, 11){\line(1, 0){17}}
 			\put(47, 9){\line(1, 0){17}}
 			\put(57, 6.5){$>$}
 			\put(70, 10){\circle*{10}}
 			\put(8, -2){\tiny $s-1$}
 			\put(39, -2){\tiny $s$}
 			\put(63, -2){\tiny $s+1$}
 			\put(76, 7){,}
 			\end{picture}
 			where $s=m+n-1$ and $\alpha_{m+n}=\delta_n$, and the associated
 				
 				\medskip
 				\noindent  higher order Serre relations are
 			\begin{eqnarray*}\label{eq:UqB-higher order3}
 			\begin{aligned}
 			&e_s e_{s-1; s; s+1}-(-1)^{[e_{s-1}]}  e_{s-1; s; s+1}e_s=0,\\
 			&f_s f_{s-1; s; s+1} -(-1)^{[f_{s-1}]}  f_{s-1; s; s+1}f_s=0;
 			\end{aligned}
 			\end{eqnarray*}
 		\end{enumerate}}
 		where
 		\begin{eqnarray*}\label{eq:Uab-higher order ef}
 		\begin{aligned}
 		e_{i;s;j}=&e_i(e_se_j-(-1)^{[e_j]}q_j^{a_{js}}e_je_s)\\
 		&-(-1)^{[e_i](1+[e_j])}q_i^{a_{is}+a_{ij}}(e_se_j-(-1)^{[e_j]}q_j^{a_{js}}e_je_s)e_i,\\
 		f_{i;s;j}=&f_i(f_sf_j-(-1)^{[f_j]}q_j^{a_{js}}f_jf_s)\\
 		&-(-1)^{[f_i](1+[f_j])}q_i^{a_{is}+a_{ij}}(f_sf_j-(-1)^{ [f_j]}q_j^{a_{js}}f_jf_s)f_i.
 		\end{aligned}
 		\end{eqnarray*}		
 \end{enumerate} 
\end{defn}

We write $\epsilon_{m+\mu}:=\delta_\mu$ for all  $1\leq \mu\leq n$. For later use, we  give a precise characterisation of the set of all admissible orders on $\epsilon_{i}$ ($1\leq i\leq m+n$). Let $\Sym_{m+n}$ be the symmetric group of degree $m+n$ on the set $\{1,2,\dots, m+n\}$.  Recall that an $(m,n)$-shuffle in $\Sym_{m+n}$ is an permutation $\sigma\in \Sym_{m+n}$ such that $\sigma(i)<\sigma(j)$ for all $1\leq i<j\leq m$ or $m<i<j\leq m+n$. Let $\epsilon_1\epsilon_2\cdots\epsilon_{m+n}$ be a word. Then the $(m,n)$-shuffle $\sigma$ acts on it by permuting the subscripts, i.e., 
\begin{equation}\label{eqshuact}
  \sigma.\epsilon_1\epsilon_2\cdots\epsilon_{m+n}:= \epsilon_{\sigma^{-1}(1)}\epsilon_{\sigma^{-1}(2)}\cdots\epsilon_{\sigma^{-1}(m+n)}. 
\end{equation}
Clearly, $\CE_{i}=\epsilon_{\sigma^{-1}(i)}$ ($i=1,2,\dots, m+n$) form an ordered basis for $\CE_{m|n}$. This means that any $(m,n)$-shuffle determines uniquely an admissible order on $\epsilon_{i}$, and vice versa. Thus, we arrive at the following lemma.
\begin{lem}
	The set of admissible orders $\lhd$ on $\epsilon_{i}, i=1,2,\dots, m+n$,  corresponds bijectively to the set of $(m,n)$-shuffles $\sigma$ in $\Sym_{m+n}$.
\end{lem}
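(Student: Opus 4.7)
The plan is to exhibit explicit mutually inverse maps between the set of admissible orders on $\{\epsilon_1,\ldots,\epsilon_{m+n}\}$ (with $\epsilon_{m+\mu}=\delta_\mu$) and the set of $(m,n)$-shuffles in $\Sym_{m+n}$, and then verify that each map lands in the claimed target set.

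First, given an admissible order $\lhd$, list the basis elements in increasing order as $\epsilon_{i_1}\lhd\epsilon_{i_2}\lhd\cdots\lhd\epsilon_{i_{m+n}}$, and define $\sigma\in\Sym_{m+n}$ by $\sigma^{-1}(k)=i_k$ for $1\le k\le m+n$; this is exactly the convention in \eqref{eqshuact}, so that the ordered basis is $\CE_k=\epsilon_{\sigma^{-1}(k)}$. I would then check that $\sigma$ is an $(m,n)$-shuffle: if $1\le i<j\le m$, then admissibility gives $\epsilon_i\lhd\epsilon_j$, which by construction means the position of $\epsilon_i$ in the ordered list precedes that of $\epsilon_j$, i.e.\ $\sigma(i)<\sigma(j)$; the argument for $m<i<j\le m+n$ is identical, using $\delta_{\mu}\lhd\delta_{\mu+1}$, i.e.\ $\epsilon_{m+\mu}\lhd\epsilon_{m+\mu+1}$.

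Second, going the other way, given an $(m,n)$-shuffle $\sigma$, declare $\epsilon_{\sigma^{-1}(1)}\lhd\epsilon_{\sigma^{-1}(2)}\lhd\cdots\lhd\epsilon_{\sigma^{-1}(m+n)}$. This is a total order, and the shuffle condition $\sigma(i)<\sigma(i+1)$ for $1\le i\le m-1$ and $\sigma(m+\mu)<\sigma(m+\mu+1)$ for $1\le \mu\le n-1$ translates directly into $\epsilon_i\lhd\epsilon_{i+1}$ and $\delta_\mu\lhd\delta_{\mu+1}$, so the order is admissible.

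Finally, the two constructions are manifestly inverse to one another: in both directions the defining relation $\CE_k=\epsilon_{\sigma^{-1}(k)}$ holds, so starting from an admissible order, producing $\sigma$, and then reading off the order from $\sigma$ recovers the original order, and likewise in the other direction. There is no substantive obstacle here; the only point requiring a little care is to keep the convention $\sigma^{-1}$ vs.\ $\sigma$ consistent with \eqref{eqshuact}, so that the shuffle condition (a monotonicity statement on $\sigma$) matches admissibility (a monotonicity statement on the positions $\sigma(i)$ of the $\epsilon_i$'s in the $\lhd$-ordered list).
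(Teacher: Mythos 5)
Your proof is correct and follows the same (essentially unique) route the paper takes: the paper simply asserts, after setting up the action $\CE_k=\epsilon_{\sigma^{-1}(k)}$ in \eqref{eqshuact}, that a shuffle determines an admissible order and vice versa, and your write-up just makes the two inverse maps and the translation between the shuffle monotonicity condition and admissibility explicit. No gaps; your care with the $\sigma$ versus $\sigma^{-1}$ convention is exactly the one point worth being careful about.
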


\subsubsection{Realisation of $\Uq(\osp(2m+1|2n))$}
Fix an admissible order $\lhd$ on $\epsilon_{i},1\leq i\leq m+n$, and let $\sigma\in\Sym_{m+n}$ be the corresponding $(m,n)$-shuffle.
Now we proceed to give a general definition of $\Clq(m,n)^{\lhd}$ arising from the  $\lhd$. For notational convenience,  we denote
\[ \phi_{m+j}:=\psi_{j},\quad \phi_{m+j}^{\dagger}:=\psi_{j}^{\dagger},\quad u_{m+j}^{\pm 1}:=v_j^{\pm 1},\quad  j=1,2,\dots, n.  \]
Then the $(m,n)$-shuffle $\sigma$ acts on the word $\phi_1\phi_2\cdots\phi_{m+n}$ as in \eqref{eqshuact}, and we introduce $\Phi_{i}:=\phi_{\sigma^{-1}(i)}$ for all $1\leq i\leq m+n$. Similarly, we introduce the elements $\Phi_{i}^{\dagger}:=\phi_{\sigma^{-1}(i)}^{\dagger}$ and $\fU_{i}^{\pm 1}:=u_{\sigma^{-1}(i)}^{\pm 1}$ for all $1\leq i\leq m+n$. Now the gradings \eqref{eqgrading} naturally induce the $\Z_2$-gradings on $\Phi_{i}, \Phi_{i}^{\dagger}$ and $\fU_{i}$, that is,
\[ [\Phi_{i}]=[\phi_{\sigma^{-1}(i)}],\quad [\Phi_{i}^{\dagger}]=[\phi_{\sigma^{-1}(i)}^{\dagger}],\quad [\fU_{i}^{\pm 1}]=\bar{0},\quad 1\leq i\leq m+n.    \]
\begin{defn}\label{defclgen}
	The deformed Clifford superalgebra $\Clq(m,n)^{\lhd}$ over  $\C(q^{\frac{1}{2}})$ is generated by $\Phi_{i}, \Phi_{i}^{\dagger}$ and $\fU_{i}$ $ (i=1,2,\dots, m+n)$, subject to the following relations:
	 \begin{align*}
	 \fU_i\fU_j=\fU_j\fU_i,&\quad \fU_i \fU_i^{-1}=\fU_i^{-1}\fU_i=1,\\
	 \fU_i\Phi_j\fU_i^{-1}=q^{-(-1)^{[\Phi_j]}\delta_{ij}}&\Phi_j,\quad \fU_i\Phi_j^{\dagger}\fU_i^{-1}=q^{(-1)^{[\Phi_j]}\delta_{ij}}\Phi_j^{\dagger},\\
	 \Phi_i\Phi_j-(-1)^{[\Phi_i][\Phi_j]}\Phi_j\Phi_i&=\Phi_i^{\dagger}\Phi_j^{\dagger}-(-1)^{[\Phi_i^{\dagger}][\Phi_j^{\dagger}]}\Phi_j^{\dagger}\Phi_i^{\dagger}=0,  \\
	 \Phi_i\Phi_j^{\dagger}-(-1)&^{[\Phi_i][\Phi_j^{\dagger}]}\Phi_j^{\dagger}\Phi_i=0, \quad i\neq j, \\
	 \Phi_i\Phi_i^{\dagger}-(-1)^{[\Phi_i]}q\Phi_i^{\dagger}\Phi_i=&\fU_i^{-1},\quad \Phi_i\Phi_i^{\dagger}-(-1)^{[\Phi_i]}q^{-1}\Phi_i^{\dagger}\Phi_i=\fU_i.
	 \end{align*}
\end{defn} 
Note that \defref{defnClsuper} is the special case of \defref{defclgen} with $(m,n)$-shuffle $\sigma=\id$.  The last two equations are equivalent to
\[  \Phi_i\Phi_i^{\dagger}=\frac{q\fU_i-(q\fU_i)^{-1}}{q-q^{-1}},\quad \Phi_i^{\dagger}\Phi_i=(-1)^{[\Phi_i]}\frac{\fU_i-\fU_i^{-1}}{q-q^{-1}}.    \]

\begin{thm}\label{thmpigen}
		The $\C(q^{\frac{1}{2}})$-linear map $\pi^{\lhd}$ from $ \Uq(\osp(2m+1|2n),\Pi^{\lhd})$ to  $\Clq(m,n)^{\lhd}$, defined by 
		\[\begin{aligned}
		& e_i \mapsto \Phi_{i}\Phi_{i+1}^{\dagger},\quad  f_i \mapsto \Phi_{i+1}\Phi_i^{\dagger}\quad k_i\mapsto -(-1)^{[\Phi_{i+1}]}\fU_i \fU_{i+1}^{-1},\quad  i=1, 2,\dots, m+n-1, \\
		& e_{m+n} \mapsto \sqrt{-1}(q^{\frac{1}{2}}-q^{-\frac{1}{2}})^{-1}\Phi_{m+n},\quad f_{m+n}\mapsto \Phi_{m+n}^{\dagger},\quad k_{m+n}\mapsto \sqrt{-1} q^{\frac{1}{2}}\fU_{m+n},
		\end{aligned}
		\]
		is an associative superalgebra homomorphism.
\end{thm}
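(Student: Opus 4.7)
My plan is to verify that $\pi^{\lhd}$ respects each of the three families of relations (R1), (R2), (R3) in \defref{defospgen}. The whole argument is a case-by-case enhancement of the proofs of \thmref{thm:alghomo} and \thmref{thm-osp}: when $\sigma=\id$, the Dynkin diagram is exactly \eqref{eqDynDiag1} and the deformed Clifford superalgebra $\Clq(m,n)^{\lhd}$ specialises to $\Clq(m,n)$, so the results of \secref{secsimcase} are recovered. For general $\sigma$, the parities of $\Phi_i$ and $\Phi_i^{\dagger}$ vary but the formal shape of every relation in \defref{defclgen} is the same, so a uniform calculation whose only new ingredients are the $\Z_2$-signs from the grading suffices.

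First I would dispose of (R1). The commutativity of the $k_j$ follows from that of the $\fU_j$, and each relation $k_i e_j k_i^{-1}=q_i^{a_{ij}} e_j$ is checked by a short computation using the definition of $\pi^{\lhd}(e_j)$ together with the $\fU$--$\Phi$ commutation rule in \defref{defclgen}; the resulting exponent of $q$ matches $a_{ij}$ in \eqref{eqCartan} after evaluating $(\alpha_i,\alpha_j)$ via \eqref{eqbilinear}. The diagonal $[e_i,f_i]_\pm$ relation is a super-analogue of \lemref{lem-1}: for $i<m+n$ one combines the two relations $\Phi_i\Phi_i^{\dagger}-(-1)^{[\Phi_i]} q^{\pm 1}\Phi_i^{\dagger}\Phi_i=\fU_i^{\pm 1}$ to produce $(\fU_i\fU_{i+1}^{-1}-(\fU_i\fU_{i+1}^{-1})^{-1})/(q-q^{-1})$, while for $i=m+n$ the computation is identical to the one performed at the short node in \thmref{thm:alghomo}.

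Next I would tackle (R2). When $a_{ss}=0$ the relation $(e_s)^2=0=(f_s)^2$ is immediate, for $\pi^{\lhd}(e_s)=\Phi_s\Phi_{s+1}^{\dagger}$ and isotropy forces exactly one of $\Phi_s,\Phi_{s+1}$ to be fermionic, whence the square vanishes after one anticommutation using $\Phi^2=0$ on the fermionic side. For non-isotropic nodes the cubic Serre relations reduce, as with \eqref{def-osp2n-2} and \eqref{def-osp2n-3}, to an identity of the form
\[
\Phi_s^2\Phi_s^{\dagger}+\Phi_s^{\dagger}\Phi_s^2-\bigl((-1)^{[\Phi_s]}q+(-1)^{[\Phi_s]}q^{-1}\bigr)\Phi_s\Phi_s^{\dagger}\Phi_s=0,
\]
verified exactly as in \eqref{e-relation}. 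The signs prescribed by the $\{k\}_{q_s}$-coefficients in \defref{defospgen} match precisely the parity signs arising from $[\Phi_s]$, so each cubic Serre relation collapses to a true identity in $\Clq(m,n)^{\lhd}$ after a short rearrangement.

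The main obstacle will be (R3). Here I would proceed sub-diagram by sub-diagram, expand $\pi^{\lhd}(e_{s-1;s;s+1})$ using the expressions in \defref{defospgen}, and use the defining relations of $\Clq(m,n)^{\lhd}$ to collapse the result to a scalar multiple of a monomial that vanishes either by a squared-nilpotent $\Phi^2=0$ or by the same quadratic identity in $\Phi_s,\Phi_s^{\dagger}$ already invoked for (R2), mirroring the calculation \eqref{ospmn-1}. The cases (a), (b), (c) differ only in whether the extreme node is even, odd isotropic, or odd non-isotropic, which dictates the relative sign $\pm(-1)^{[e_{s-1}]}$ attached to $e_{s-1;s;s+1}e_s$; keeping track of these signs under the shuffle $\sigma$ is the principal notational burden, but once one observes that every sign arises from a single occurrence of $[\Phi_{s\pm 1}]$ and matches the prescribed $[e_{s\pm 1}]$, the identity follows. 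The corresponding $f$-side relations are dual in an obvious sense and follow by an identical argument.
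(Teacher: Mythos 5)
Your proposal follows essentially the same route as the paper: reduce (R1), (R2) and the all-even cases of (R3) to the computations already carried out for $\sigma=\id$ in \thmref{thm:alghomo} and \thmref{thm-osp}, then verify the remaining higher-order Serre relations sub-diagram by sub-diagram, collapsing each image to a monomial that vanishes either by fermionic nilpotency $\Phi_s^2=0$ or by the bosonic identity \eqref{e-relation}; the paper does exactly this, working out the all-$\otimes$ instance of diagram (a) in its two sub-cases as the representative computation. One caveat: the unified identity you display in the (R2) step, $\Phi_s^2\Phi_s^{\dagger}+\Phi_s^{\dagger}\Phi_s^2-(-1)^{[\Phi_s]}(q+q^{-1})\Phi_s\Phi_s^{\dagger}\Phi_s=0$, is false when $[\Phi_s]=\bar{1}$ (there $\Phi_s\Phi_s^{\dagger}\Phi_s=\fU_s^{-1}\Phi_s\neq 0$); the fermionic cases must instead be handled as you do elsewhere, via $\Phi_s^2=0$ at the short node and via the $q$-commutator factorisation $\pi(e_{i-1}e_i-qe_ie_{i-1})\propto\fU_i^{-1}\Phi_{i-1}\Phi_{i+1}^{\dagger}$ used in the proof of \thmref{thm:alghomo}, so this slip does not affect the viability of the argument.
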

\begin{proof}
  For the Dynkin diagram of Type 1, the proof of Theorem \ref{thm-osp} has already shown that the map $\pi^{\lhd}$ preserves the defining relations (R1) and (R2) in \defref{defospgen}. Similarly, we can prove  as in Theorem \ref{thm-osp} that  $\pi^{\lhd}$ preserves the defining relations (R1) and (R2) in \defref{defospgen}  for the Dynkin diagram of Type 2.
  
  Now we need to show that $\pi^{\lhd}$ also respects the higher order Serre relations (R3) related to diagrams (a), (b) and (c). The cases with all $\times$'s corresponding to even simple roots in diagrams (a) and (c) have also been covered by the proof of Theorem \ref{thm-osp}. The remaining cases with some or all $\times$'s being isotropic odd simple roots $\ot$ can all be proven in a similar way. Thus we will illustrate the proof by considering as an example the following case of diagram (a):
	\begin{center}
		\begin{tikzpicture}
		\node at (0,0) {\Large $\otimes$};
		\draw (0.2,0)--(1.0,0);
		\node at (1.2,0) {\Large $\otimes$};
		\draw (1.4,0)--(2.2,0);
		\node at (2.4,0) {\Large $\otimes$};
		
		\node at (0,-0.5) {\tiny $s-1$}; 
		\node at (1.2,-0.5) {\tiny $s$};
		\node at (2.4,-0.5) {\tiny $s+1$};  
		\node at (2.8, -0.1) {.};	
		\end{tikzpicture}
	\end{center}
There are two cases for the simple odd roots associated to this sub-diagram.
	
Case 1: the associated simple roots are
\[ \alpha_{s-1}=\epsilon_{s-1}-\delta_{s-1},\quad \alpha_{s}=\delta_{s-1}-\epsilon_{s}, \quad \alpha_{s+1}=\epsilon_s-\delta_{s}. \] 
Then we have 
\[
\begin{aligned}
&\Phi_{s-1}=\phi_{s-1},\quad \Phi_{s}=\psi_{s-1},\quad \Phi_{s+1}=\phi_{s},\quad\Phi_{s+2}=\psi_s,\\
&\Phi_{s-1}^{\dagger}=\phi_{s-1}^{\dagger},\quad \Phi_{s}^{\dagger}=\psi_{s-1}^{\dagger},\quad \Phi_{s+1}^{\dagger}=\phi_{s}^{\dagger},\quad\Phi_{s+2}^{\dagger}=\psi_s^{\dagger},\\
&\fU_{s-1}^{\pm 1}=u_{s-1}^{\pm 1},\quad \fU_{s}^{\pm 1}=v_{s-1}^{\pm 1},\quad \fU_{s+1}^{\pm 1}=u_{s}^{\pm 1},\quad \fU_{s+1}^{\pm 1}=v_{s}^{\pm 1}.
\end{aligned}
 \]
Thus, the map $\pi^{\lhd}$ on corresponding generators  is given by
\begin{equation}\label{case 1}\begin{aligned}
		& e_{s-1} \mapsto \phi_{s-1}\psi_{s-1}^{\dagger},\quad  f_{s-1} \mapsto \psi_{s-1}\phi_{s-1}^{\dagger},\quad k_{s-1}\mapsto u_{s-1} v_{s-1}^{-1}, \\
		& e_{s} \mapsto \psi_{s-1}\phi_{s}^{\dagger},\quad  f_{s} \mapsto \phi_{s}\psi_{s-1}^{\dagger},\quad k_s\mapsto -v_{s-1} u_{s}^{-1}, \\
		& e_{s+1} \mapsto \phi_{s}\psi_{s}^{\dagger},\quad  f_{s+1} \mapsto \psi_{s}\phi_{s}^{\dagger},\quad k_{s+1}\mapsto u_{s} v_{s}^{-1}.
	\end{aligned}
\end{equation}
We shall prove that the images of (\ref{case 1}) preserve the first relation given in diagram (a), the second relation can be proved along the same line. Recalling the bilinear form in \eqref{eqbilinear} and Cartan matrix \eqref{eqCartan}, we have 
$$a_{s-1,s}=1,\quad a_{s-1,s+1}=0,\quad a_{s+1,s}=-1,\quad q_{s-1}=q_{s+1}=q^{-1}.$$
and  $e_{s-1;s;s+1}=e_{s-1}(e_se_{s+1}+qe_{s+1}e_s)-q^{-1}(e_se_{s+1}+qe_{s+1}e_s)e_{s-1}$.
Note that 
$$\pi^{\lhd}(e_se_{s+1}+qe_{s+1}e_s)=\psi_{s-1}\psi_{s}^{\dagger}(\phi_{s}^{\dagger}\phi_{s}-q\phi_{s}\phi_{s}^{\dagger})=-q\psi_{s-1}\psi_{s}^{\dagger}u_{s}.$$
Therefore,
$$\pi^{\lhd}(e_{s-1;s;s+1})=\psi_{s-1}\psi_{s}^{\dagger}u_{s}\phi_{s-1}\psi_{s-1}^{\dagger}-q\phi_{s-1}\psi_{s-1}^{\dagger}\psi_{s-1}\psi_{s}^{\dagger}u_{s}.
$$
By using Definition \ref{defclgen}, we obtain 
\[
\begin{aligned}
&\pi^{\lhd}(e_s e_{s-1; s; s+1}+(-1)^{[e_{s-1}]+[e_{s+1}]}  e_{s-1; s; s+1}e_s)\\
=& \psi_{s-1}\phi_{s}^{\dagger}(\psi_{s-1}\psi_{s}^{\dagger}u_{s}\phi_{s-1}\psi_{s-1}^{\dagger}-q\phi_{s-1}\psi_{s-1}^{\dagger}\psi_{s-1}\psi_{s}^{\dagger}u_{s})\\
&+(\psi_{s-1}\psi_{s}^{\dagger}u_{s}\phi_{s-1}\psi_{s-1}^{\dagger}-q\phi_{s-1}\psi_{s-1}^{\dagger}\psi_{s-1}\psi_{s}^{\dagger}u_{s})\psi_{s-1}\phi_{s}^{\dagger}\\
=&\phi_{s-1}\phi_{s}^{\dagger}(q^2\psi_{s-1}^{\dagger}\psi_{s-1}^2-\psi_{s-1}^2\psi_{s-1}^{\dagger})\psi_{s}^{\dagger}u_{s}\\
=&0.
\end{aligned}
\]

Case 2: the associated simple roots are
\[ \alpha_{s-1}=\delta_{s-1}-\epsilon_{s-1},\quad \alpha_{s}=\epsilon_{s-1}-\delta_{s}, \quad \alpha_{s+1}=\delta_s-\epsilon_{s}. \]
Similarly, we have the map $\pi^{\lhd}$ on corresponding generators given by
\begin{equation}\label{case 2}\begin{aligned}
		& e_{s-1} \mapsto \psi_{s-1}\phi_{s-1}^{\dagger},\quad  f_{s-1} \mapsto \phi_{s-1}\psi_{s-1}^{\dagger},\quad k_{s-1}\mapsto -v_{s-1}u_{s-1}^{-1}, \\
		& e_{s} \mapsto \phi_{s-1}\psi_{s}^{\dagger},\quad  f_{s} \mapsto \psi_{s}\phi_{s-1}^{\dagger},\quad k_s\mapsto u_{s-1}v_{s}^{-1}, \\
		& e_{s+1} \mapsto \psi_{s}\phi_{s}^{\dagger},\quad  f_{s+1} \mapsto \phi_{s}\psi_{s}^{\dagger},\quad k_{s+1}\mapsto -v_{s}u_{s}^{-1}.
	\end{aligned}
\end{equation}
In this case, 
$$e_{s-1;s;s+1}=e_{s-1}(e_se_{s+1}+q^{-1}e_{s+1}e_s)-q(e_se_{s+1}+q^{-1}e_{s+1}e_s)e_{s-1},$$
$$\pi^{\lhd}(e_se_{s+1}+q^{-1}e_{s+1}e_s)=\phi_{s-1}\phi_{s}^{\dagger}(\psi_{s}^{\dagger}\psi_{s}+q^{-1}\psi_{s}\psi_{s}^{\dagger})=q^{-1}\phi_{s-1}\phi_{s}^{\dagger}v_{s}^{-1}.$$
By using Definition \ref{defclgen} and relation (\ref{e-relation}), we obtain 
\[
\begin{aligned}
&\pi^{\lhd}(e_s e_{s-1; s; s+1}+(-1)^{[e_{s-1}]+[e_{s+1}]}  e_{s-1; s; s+1}e_s)\\
=& \phi_{s-1}\psi_{s}^{\dagger}(q^{-1}\psi_{s-1}\phi_{s-1}^{\dagger}\phi_{s-1}\phi_{s}^{\dagger}v_{s}^{-1}-\phi_{s-1}\phi_{s}^{\dagger}v_{s}^{-1}\psi_{s-1}\phi_{s-1}^{\dagger})\\
&+(q^{-1}\psi_{s-1}\phi_{s-1}^{\dagger}\phi_{s-1}\phi_{s}^{\dagger}v_{s}^{-1}-\phi_{s-1}\phi_{s}^{\dagger}v_{s}^{-1}\psi_{s-1}\phi_{s-1}^{\dagger})\phi_{s-1}\psi_{s}^{\dagger}\\
=&\psi_{s-1}\psi_{s}^{\dagger}((q+q^{-1})\phi_{s-1}\phi_{s-1}^{\dagger}\phi_{s-1}-\phi_{s-1}^2\phi_{s-1}^{\dagger}-\phi_{s-1}^{\dagger}\phi_{s-1}^2)\phi_{s}^{\dagger}v_{s}^{-1}\\
=&0.
\end{aligned}
\]
The second relation given in diagram (a) can be proved in the same way. This completes the proof of our example,
\end{proof}

We now construct the  Fock space $V_q(m,n)^{\lhd}$ of $\Clq(m,n)^{\lhd}$.
As a vector space, $\Clq(m,n)^{\lhd}$ is spanned by the basis elements
\[ \vact:=\prod_{i=1}^{m+n}(\Phi_{i}^{\dagger})^{t_i}\vac, \quad \bt=(t_1,t_2,\dots,t_{m+n}),\]
where $t_i\in \Z_{+}$ if $[\Phi_{i}^{\dagger}]=\bar{0}$, and $t_i\in \Z_{2}$ if $[\Phi_{i}^{\dagger}]=\bar{1}$, and $\vac$ is the vacuum vector such that
\[ \Phi_i\vac=0, \quad \fU_i\vac=\vac, \quad i=1,2,\dots, m+n.  \] 
The action of $\Clq(m,n)^{\lhd}$ on the Fock space  $V_q(m,n)^{\lhd}$ can be derived explicitly from the defining relations in \defref{defclgen}.
By using similar method as in \propref{prop:irrcl}, we conclude that $V_q(m,n)^{\lhd}$ is an infinite dimensional irreducible representation of $\Clq(m,n)^{\lhd}$. 


\begin{prop}\label{propspingen}
	The  spinor representation  $V_q(m,n)^{\lhd}$ of  $\U_{q}(\osp(2m+1|2n),\Pi^{\lhd})$  is irreducible with the highest weight vector $\vac$ of weight  
	\[ (-(-1)^{[\Phi_2]},-(-1)^{[\Phi_3]},\dots, -(-1)^{[\Phi_{m+n}]},\sqrt{-1}q^{\frac{1}{2}}).   \]
\end{prop}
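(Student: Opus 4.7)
The proof follows the template of Propositions \ref{PropSpinorSo} and \ref{PropSpinor-osp}. First, I would verify that $\vac$ is a highest weight vector with the claimed weight. For $1 \le i \le m+n-1$, using the $q$-(anti)commutation between $\Phi_i$ and $\Phi_{i+1}^{\dagger}$ from \defref{defclgen}, we obtain $\pi^{\lhd}(e_i)\vac = \Phi_i\Phi_{i+1}^{\dagger}\vac = \pm\Phi_{i+1}^{\dagger}\Phi_i\vac = 0$, and $\pi^{\lhd}(e_{m+n})\vac = \sqrt{-1}(q^{\frac{1}{2}}-q^{-\frac{1}{2}})^{-1}\Phi_{m+n}\vac = 0$. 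Since $\fU_j\vac=\vac$ for every $j$, the assignments $k_i\mapsto -(-1)^{[\Phi_{i+1}]}\fU_i\fU_{i+1}^{-1}$ for $i\le m+n-1$ and $k_{m+n}\mapsto \sqrt{-1}q^{\frac{1}{2}}\fU_{m+n}$ produce the stated weight immediately.

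For irreducibility, I would extend the lexicographic order on $V_q(m,n)^{\lhd}$ as in \secref{secsimcase}: declare $\vacs\succ\vact$ whenever, at the first coordinate $a$ on which $\bs$ and $\bt$ disagree, one has $s_a>t_a$. Setting $\deg\vact:=\sum_i t_i$, and given a nonzero $f=\sum_{\bt}a_{\bt}\vact\in V_q(m,n)^{\lhd}$ with leading term $a_{\bs}\vacs$, I induct on $\deg\vacs$ to produce a nonzero scalar multiple of $\vac$ via repeated action of the $e_i$'s. The base case $\deg\vacs=0$ is trivial.

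For the inductive step, exploit the two operations available from $\pi^{\lhd}$: for $i<m+n$, $\pi^{\lhd}(e_i)=\Phi_i\Phi_{i+1}^{\dagger}$ acts as a ``right shift'' that moves one excitation from site $i$ to site $i+1$ (up to a nonzero scalar arising from the $q$-(anti)commutations), while $\pi^{\lhd}(e_{m+n})$ is essentially annihilation at the last site. If $s_{m+n}>0$, apply $\pi^{\lhd}(e_{m+n})$ to obtain a nonzero scalar multiple of $|\bs-\be_{m+n}\rangle$ as the leading term of $\pi^{\lhd}(e_{m+n})f$, reducing the degree by $1$. Otherwise, let $a<m+n$ be the largest index with $s_a>0$; the composite $\pi^{\lhd}(e_{m+n-1}\cdots e_{a+1}e_a)$ transports a single excitation from site $a$ to site $m+n$, yielding a leading term of the form $|\bs-\be_a+\be_{m+n}\rangle$, after which $\pi^{\lhd}(e_{m+n})$ lowers the degree. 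Iteration closes the induction and delivers a scalar multiple of $\vac$.

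The main obstacle is book-keeping: at each stage one must verify that the transported maximal term remains strictly $\succ$-maximal in the basis expansion of the new vector, and that every scalar accumulated through the $q$-(anti)commutations of \defref{defclgen} is nonzero in $\C(q^{\frac{1}{2}})$. These scalars are either $q$-integers $[s_i]_q$ at bosonic sites or signs $\pm 1$ at fermionic sites, whose nonvanishing is governed by the parities $[\Phi_i]$ inherited from the $(m,n)$-shuffle $\sigma$ underlying $\lhd$. A case analysis (bosonic versus fermionic for each site traversed) confirms that neither a right shift nor an annihilation at site $m+n$ can promote a lower basis vector above $\vacs$ in the $\succ$-ordering, so the argument goes through uniformly for all admissible orders.
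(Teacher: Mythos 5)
Your argument is correct, but it is not the route the paper takes for this proposition. The paper proves irreducibility indirectly: it shows that the homomorphism $\pi^{\lhd}$ of \thmref{thmpigen} is \emph{surjective} (recovering the $\fU_i$ from the $k_i$, then peeling off the generators $\Phi_i^{\dagger}$ one at a time via identities such as $\Phi_{m+n-1}^{\dagger}\Phi_{m+n}\cdot\Phi_{m+n}^{\dagger} \mp q^{-1}\Phi_{m+n}^{\dagger}\cdot\Phi_{m+n-1}^{\dagger}\Phi_{m+n} = \Phi_{m+n-1}^{\dagger}\fU_{m+n}$, and similarly for the $\Phi_i$), so that every $\U_q$-submodule of $V_q(m,n)^{\lhd}$ is a $\Clq(m,n)^{\lhd}$-submodule and irreducibility is inherited from the Fock-space irreducibility already established. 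You instead generalise the direct lexicographic induction of Propositions \ref{PropSpinorSo} and \ref{PropSpinor-osp}. Your scheme does go through: the key book-keeping point you flag is genuinely unproblematic, because $\pi^{\lhd}(e_a)$ sends each basis label $\bt$ to $\bt-\be_a+\be_{a+1}$ (or to $0$), a uniform translation that preserves strict lexicographic comparisons among surviving terms, and your choice of $a$ as the largest occupied index in the leading term guarantees the leading term survives with a nonzero coefficient ($[s_a]_q$ or $\pm 1$). The paper's surjectivity argument is shorter and yields the stronger statement that the $\U_q$- and $\Clq$-module lattices coincide (which is why the paper remarks that Propositions \ref{PropSpinorSo} and \ref{PropSpinor-osp} follow from this one); your argument is more computational but self-contained and exhibits explicit lowering of each vector to the vacuum. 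If you write yours up, do spell out the translation-invariance observation above rather than leaving the case analysis implicit.
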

\begin{proof}
	It suffices to show that the superalgebra homomorphism $\pi^{\lhd}$ given in \thmref{thmpigen} is surjective, since  $V_q(m,n)^{\lhd}$ is irreducible  as $\Clq(m,n)^{\lhd}$-module. 
	
	It is easy to see that the image $\im\, \pi^{\lhd}$ contains generators $\fU_{i}$, $i=1,2,\dots, m+n$. Since
	\[ 
	\begin{aligned}
	   &(\Phi_{m+n-1}^{\dagger}\Phi_{m+n})\Phi^{\dagger}_{m+n}-(-1)^{[\Phi_{m+n}]+[\Phi^{\dagger}_{m+n-1}][\Phi^{\dagger}_{m+n}]} q^{-1}\Phi^{\dagger}_{m+n}(\Phi^{\dagger}_{m+n-1}\Phi_{m+n})\\
	   =& \Phi_{m+n-1}^{\dagger} (\Phi_{m+n}\Phi^{\dagger}_{m+n}-(-1)^{[\Phi_{m+n}]}\Phi^{\dagger}_{m+n}\Phi_{m+n} )\\
	   =&  \Phi_{m+n-1}^{\dagger}\fU_{m+n}.
	\end{aligned}	
	\]
   and  $\fU_{m+n}^{\pm 1}, \Phi^{\dagger}_{m+n}, \Phi_{m+n-1}^{\dagger}\Phi_{m+n}\in \im\, \pi^{\lhd}$,  we obtain that  $\Phi_{m+n-1}^{\dagger} \in \im \, \pi^{\lhd}$. Continuing similar calculations, we conclude that  $\im\, \pi^{\lhd}$ contains all generators $\Phi_{i}^{\dagger}$, $i=1,2,\dots, m+n$. It can be proved in a similar way that the generators  $\Phi_{i}$, $i=1,2,\dots, m+n$ are contained in $\im\, \pi^{\lhd}$. Therefore, $\pi^{\lhd}$ is surjective.
\end{proof}
\begin{rmk}
  	Though \propref{PropSpinorSo} and \propref{PropSpinor-osp} can be deduced immediately from \propref{propspingen}, their proofs are interesting in their own rights.
\end{rmk}

\begin{rmk}
	We remark that there is an associative algebra isomorphism between the quantum supergroup $\Uq(\osp(1|2n),\Theta_1)$ and quantum group $\U_{-q}(\so(2n+1))$; see \cite{Z92} and \cite[Theorem 1.1]{XZ}. Any  representation, especially spinor representation (cf. \cite{H}), of $\U_{-q}(\so(2n+1))$  can be translated to $\Uq(\osp(1|2n))$ via the preceding isomorphism. In particular, we note that the highest weight of spinor representation of $\U_{-q}(\so(2n+1))$ is $(1,\dots,1,(-q)^{\frac{1}{2}})$, which is essentially the same as that in \propref{PropSpinorSo}. More generally, the above mentioned isomorphism does exist for any fundamental system of $\osp(1|2n)$ (see \cite[Theorem 1.1]{XZ}), which ensures the existence of spinor representation for  $\Uq(\osp(1|2n),\Theta)$ with arbitrary $\Theta$. Although there is no such associative algebra homomorphism from $\Uq(\osp(2m+1|2n),\Theta)$ to any quantum group, we are able to generalise this construction in  \thmref{thmpigen} and \propref{propspingen} by means of deformed Clifford superalgebra $\Clq(m,n)$.
%
%
\end{rmk}

\noindent
{\bf Acknowledgement.}  This work was done during the authors' visit at the University of Sydney. The authors thank Professor Ruibin Zhang for discussions and help in preparing this paper.  This work is  supported by China Scholarship Council  and  Natural Science Foundation of Shanghai (Grant No. 16ZR1415000).

\end{document}